\newcommand*\sigmac{\includegraphics[width=.8em]{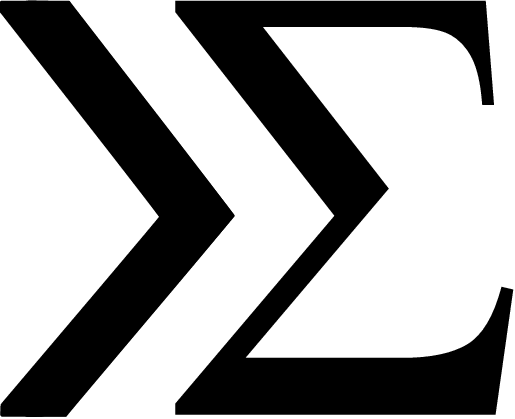}}
\numberwithin{equation}{section}
\theoremstyle{plain}
\newtheorem{theorem}{Theorem}[section]
\newtheorem{proposition}[theorem]{Proposition}
\newtheorem{corollary}[theorem]{Corollary}
\theoremstyle{definition}
\newtheorem{definition}[theorem]{Definition}
\newtheorem{remark}[theorem]{Remark}
\newtheorem{assumption}{Assumption}
\crefname{theorem}{}{}
\Crefname{theorem}{}{}
\begin{document}
\title[The Multisign Algebra: A Generalization of the Sign Concept]{The Multisign Algebra: A Generalization of the Sign Concept}
\corrauthor[S. Aliaga-Rojas]{Sebastián Aliaga-Rojas}
\address{Departamento de Ingeniería Informática\\Facultad de Ingeniería\\ Universidad de Santiago de Chile\\Av. Alameda Libertador Bernardo O'Higgins 3363\\Chile}
\email{sebastian.aliaga.r@usach.cl}
\author[P. Landero-Sepúlveda]{Pamela Landero-Sepúlveda}
\address{Departamento de Ingeniería Informática\\Facultad de Ingeniería\\ Universidad de Santiago de Chile\\Av. Alameda Libertador Bernardo O'Higgins 3363\\Chile}
\email{pamela.landero@usach.cl}
\author[M. Inostroza-Ponta]{Mario Inostroza-Ponta}
\address{Departamento de Ingeniería Informática\\Facultad de Ingeniería\\ Universidad de Santiago de Chile\\Av. Alameda Libertador Bernardo O'Higgins 3363}
\urladdr{https://informatica.usach.cl/academico/mario-inostroza-ponta/}
\email{mario.inostroza@usach.cl}
\subjclass{08A02, 08A05, 06F99, 12A99}
\keywords{multisign algebra, sign generalization, generalized number systems, algebraic structures, polarity encoding, scalar sign, non-binary sign, numerical generalization, algebraic operations, structured scalar}
\begin{abstract}
The classical number system encodes magnitude using a single scalar value whose sign—positive or negative—has remained conceptually unchanged for centuries. This work introduces \textit{Multisign Algebra}, a mathematical generalization of the sign concept that extends the expressive capacity of real numbers. Instead of relying on a binary sign attached to a scalar value, Multisign Algebra assigns a structured scalar for sign, encoding a richer, internally organized notion of polarity within a single numerical object. We formally define multisign numbers, their algebraic operations, and the axioms that govern them, showing that this generalization preserves essential properties of classical algebraic structures while enabling new behaviors unavailable on the standard real line. This approach extends the notion of sign beyond ± and offers a refined way to encode polarity and directionality within algebraic systems.
\end{abstract}
\maketitle
\section{Introduction}\label{introduction}

At the heart of mathematical theory, the concept of binary signs, traditionally represented by the symbols ``+'' and ``-'', has been a fundamental tool for the description and manipulation of numbers and algebraic expressions. This binary system of signs not only facilitates basic arithmetic operations but also underlies the definition of more complex algebraic structures, such as groups and rings, playing a crucial role in number theory and other branches of mathematics. This raises a natural question: what would happen if numbers were represented using a broader system of signs?

\paragraph{On the meaning of sign.}
In classical algebra, the notion of sign is tightly linked to order: the symbols $+$ and $-$ indicate relative position on the real line. In such settings, the additive inverse of an element is unique, and the symbol ``$-a$'' is merely a notational convention for that inverse.

In this work, the concept of \emph{sign} is not related to order. Instead, a sign represents a discrete algebraic direction associated with a magnitude. Signs are treated as abstract labels drawn from a finite set $D$, and they encode directional information without inducing any total or partial order. This separation between sign (direction) and magnitude allows algebraic phenomena that cannot occur in ordered algebraic structures, most notably the emergence of multiple additive inverses for $s>2$, which we prove once the multisign operations are formally introduced.

While signs are not order-theoretic objects, magnitudes live in an ordered set
(e.g., $\mathbb{R}_{\ge 0}$), and comparisons of magnitudes control cancellation
in the multisign addition introduced later.

To connect this abstract notion with the familiar binary case, we briefly recall how sign and magnitude are combined in the classical two-sign system. In a binary system of two signs, numbers are written, for example, as
\begin{align*}
    +1, +2, +3, +4 \quad \text{and} \quad -1, -2, -3, -4,
\end{align*}
where the sign explicitly indicates the direction and the magnitude represents the numerical value. Binary operations for addition and multiplication may be written using $\oplus$ and $\otimes$ to distinguish the sign symbols $+$ and $-$ from the usual arithmetic operators:
\begin{align*}
        +1 \oplus +0.5 &= +1.5, &
        +1 \oplus -1 &= 0,\\
        +2 \otimes -3 &= -6, &
        +2 \otimes +0.5 &= +1.
\end{align*}

The following examples are purely illustrative and should not be interpreted as defining the general behavior of the operations $\oplus$ and $\otimes$, which will be introduced formally in Section~\ref{Generalization_concepts}. For instance, if a third sign ``?'' is introduced, simple expressions such as
\begin{align*}
    +1 \oplus {?1} = 0,
    \qquad
    {?2} \otimes +1 = {?2},\\
    -1 \oplus {?1} = 0,
    \qquad
    {?2} \otimes -1 = +2,
\end{align*}
already exhibit behavior that cannot be captured within the classical binary framework. Similar effects arise when additional signs are considered.

The purpose of these examples is solely to provide intuition; the formal definitions and algebraic properties governing multisign operations are presented in the following sections. The aim of this work is to generalize the traditional binary sign system to a setting with $s$ signs, introducing appropriate notions of addition, multiplication, and algebraic structure consistent with this broader framework.

Such a generalization suggests new ways of organizing algebraic information and motivates the study of algebraic systems in which multiple additive inverses naturally arise. The remainder of the paper develops the formal foundations of this multisign framework, establishing its basic properties and situating it with respect to classical algebraic structures.

\section{Preliminaries}\label{related_concepts}

The uniqueness of additive inverses is closely tied to the associative property. The proof is as follows. Consider a set $A$ with an associative binary operation $\circ$. Let $e$ denote an identity element such that
\begin{align*}
	e \circ x = x = x \circ e,
\end{align*}
for all $x \in A$. An inverse of $x$ is an element $x^{\prime} \in A$ satisfying
\begin{align*}
	x \circ x^{\prime} = e = x^{\prime} \circ x.
\end{align*}
If $x^{\prime} \in A$ and $x^{\prime\prime} \in A$ are both inverses of $x$, then $x^{\prime} = x^{\prime\prime}$, since
\begin{align}
	x^{\prime} &= x^{\prime} \circ e, \nonumber\\
	x^{\prime} \circ e &= x^{\prime} \circ (x \circ x^{\prime\prime}), \nonumber\\
	x^{\prime} \circ (x \circ x^{\prime\prime}) &= (x^{\prime} \circ x) \circ x^{\prime\prime}, \label{associative_step}\\
	(x^{\prime} \circ x) \circ x^{\prime\prime} &= e \circ x^{\prime\prime}, \nonumber\\
	e \circ x^{\prime\prime} &= x^{\prime\prime}. \nonumber
\end{align}
The step in Equation~\eqref{associative_step} is only possible if associativity holds. This guarantees that the inverse of $x$ is unique. When associativity is not satisfied, this argument breaks down, leaving open the possibility of non-unique inverses. Allowing different groupings of operands may therefore enable the existence of distinct inverses for the same element.

Classical algebra enforces the uniqueness of inverse elements through full associativity.
When associativity holds, the standard cancellation argument implies that every element
admits at most one inverse.

In contrast, the approach developed in this work relaxes associativity in a controlled,
sign-dependent manner. As a consequence, the non-uniqueness of inverses is not a defect
of the structure, but a structural feature that naturally emerges from the weakened
associativity constraints.

These structures generalize classical algebraic systems by allowing each nonzero
element to admit multiple additive inverses, while preserving a unique identity
element.

Numerous algebraic structures that relax or replace classical associativity have been studied in the literature. Examples include Lie algebras, which satisfy antisymmetry and the Jacobi identity instead of associativity \cite{Iachello2014}; Leibniz algebras, which generalize Lie algebras by relaxing antisymmetry \cite{Ayupov1997}; Jordan and Malcev algebras, which replace associativity with alternative polynomial identities \cite{Liu2006}; and pre-Lie (Vinberg) algebras, which satisfy the Vinberg identity \cite{CHAPOTON2013}. Other related non-associative or partially associative structures include Moufang loops and alternative algebras, among others \cite{Nagy2007, Casas2019}.

To our knowledge, there are no publications in the mathematical literature that attempt to formalize the concept of sign as an independent algebraic object. The closest related approach we have found is due to H.~von~Eitzen \cite{vonEitzen2009}, who describes the notion of PolySign Numbers ($P_s$), a concept originally discussed by T.~Golden in the sci.math Usenet group \cite{polysign_usenet}. In this framework, a number is defined as a collection of magnitudes associated with different signs that satisfy a cancellation identity. For instance, with $n$ signs,
\begin{align*}
\sum_{i=1}^{n}\sigma_{i}x = 0,
\end{align*}
where $\sigma_i$ denotes a sign and $x$ its magnitude.

Addition in $P_s$ is defined componentwise,
\begin{align*}
\sum_{i=1}^{n}\sigma_{i}x_{i} + \sum_{i=1}^{n}\sigma_{i}y_{i}
=
\sum_{i=1}^{n}\sigma_{i}(x_{i}+y_{i}),
\end{align*}
and multiplication is defined by a cyclic rule on the signs. In this setting, each element admits a unique additive inverse, and the notion of sign is not intrinsically linked to the existence of multiple inverses.

In contrast, in the multisign framework proposed here, signs are explicitly formalized and intrinsically tied to the possibility of non-unique additive inverses. We introduce a family of multisign sets, denoted $\sigmac^{s}$, together with two binary operations and a set of algebraic properties that allow multiple additive inverses while retaining a controlled form of associativity. These structures are built upon the notions of $n$i-loops and signed-associativity, which generalize classical associativity. We show that the real numbers arise as a particular case of this framework by proving an isomorphism between $\sigmac_{\mathbb{R}_{\ge 0}}^{2}$ and $\mathbb{R}$.

\paragraph{Central phenomenon.}
For $s>2$, additive inverses in $(\sigmac^{s},\oplus)$ are not unique.
This phenomenon is incompatible with classical groups/rings/fields and is the main structural novelty of the framework.

\begin{remark}
Throughout this work, the parameter $n$ appearing in $n$i-structures coincides with
$n=s-1$, the number of additive inverses of each nonzero element in $(\sigmac^{s},\oplus)$.
We retain the notation $n$i-structure to emphasize that the theory applies abstractly
to any prescribed number of inverses.
\end{remark}

\section{Generalization of the concept of sign and new algebraic structures.}\label{Generalization_concepts}
\subsection{Sign of a number}

\begin{definition}\label{semiring_with_invertible_multiplication}
A \textbf{semiring with invertible multiplication} is a non-empty set $P$ equipped with two binary operations
$+$ and $\cdot$ such that:
\begin{enumerate}[label=\textnormal{(\roman*)}]
\item $(P,+)$ is a commutative monoid with additive identity $w\in P$ (not necessarily a group);
\item $(P\setminus\{w\},\cdot)$ is an abelian group (so every element of $P\setminus\{w\}$ has a multiplicative inverse);
\item multiplication distributes over addition, i.e., for all $a,b,c\in P$,
\[
a\cdot(b+c)=(a\cdot b)+(a\cdot c)
\quad\text{and}\quad
(b+c)\cdot a=(b\cdot a)+(c\cdot a).
\]
\end{enumerate}
\end{definition}

\begin{assumption}\label{ass:w-zero}
Throughout this work, we assume that the additive identity of $P$ is denoted by $0$,
that is, $w=0$.
\end{assumption}

\begin{remark}
Although we use the term ``semiring with invertible multiplication'', note that $P$ is not required to
have additive inverses; it is closer to a commutative semiring whose nonzero elements
form an abelian group under multiplication.
\end{remark}

\begin{remark}\label{example_semiring_with_invertible_multiplication}
The set $\mathbb{R}_{\ge 0}$ with the usual addition $+$ forms the commutative monoid
$(\mathbb{R}_{\ge 0},+)$ with additive identity $w=0$, and with the usual multiplication
$\cdot$ forms the abelian group $(\mathbb{R}_{>0},\cdot)$.
Moreover, multiplication is distributive over addition. Hence $\mathbb{R}_{\ge 0}$ is a
semiring with invertible multiplication in the sense of Definition \ref{semiring_with_invertible_multiplication}.
\end{remark}

\begin{definition}\label{sign}
Fix an integer $s\ge 1$ and let $P$ be a semiring with invertible multiplication. We define the set of signs as
\[
D=\{0,1,2,\dots,s\}\subset\mathbb{N},
\]
where $0$ is reserved for the additive identity and the $s$ nonzero elements
$D\setminus\{0\}$ represent the available directions (signs) that may accompany a magnitude
$p\in P$.
\end{definition}

\subsection{Sigma set ($\sigmac^{s}$)}
\begin{definition}\label{sign_num}
We define a new set of numbers, denoted $\sigmac^{s}$, whose elements are formal pairs of the form
${^{d}p}$, where $d\in D$ is a sign and $p\in P$ is a magnitude. Here $s:=|D\setminus\{0\}|$ is the
number of nonzero signs.
\end{definition}

\begin{remark}
The sign component $d \in D$ is not an algebraic generator and is not subject to polynomial relations. In particular, signs do not belong to the semiring with invertible multiplication $P$ and do not satisfy algebraic identities such as $d^n = 1$ or linear relations of the form $d + 1 = 0$.

Consequently, the multisign set $\sigmac^{s}$ is not obtained as a quotient of a polynomial ring such as $P[x]/(x^n - 1)$, nor does it arise from adjoining units to an existing ring. Signs act as discrete indices governing the interaction of magnitudes, rather than as algebraic elements subject to internal relations.
\end{remark}

\begin{definition}\label{def:zero-sign}
We impose the convention that the only element with magnitude $0$ is the additive identity:
\[
{^{d}p}={^{0}0}\quad\Longleftrightarrow\quad p=0
\quad(\text{equivalently, } p=0 \Rightarrow d=0).
\]
\end{definition}

\subsection{$n$i-Loop}

\begin{definition}\label{def:ni-loop}
An \textbf{$n$i-loop} $(\sigmac^{s},\circledast)$ is a non-empty set equipped with a binary
operation $\circledast$ satisfying the following properties:
\begin{enumerate}[label=\textnormal{\arabic*.}]
\item \textnormal{\textbf{Internal operation}}:
For all ${^{i}a},{^{j}b}\in\sigmac^{s}$,
\[
{^{i}a}\circledast {^{j}b} = {^{k}c}\in\sigmac^{s}.
\]

\item \textnormal{\textbf{Existence of identity}}:
There exists an element ${^{h}e}\in\sigmac^{s}$ such that, for all
${^{i}a}\in\sigmac^{s}$,
\[
{^{h}e}\circledast {^{i}a}={^{i}a}={^{i}a}\circledast {^{h}e}.
\]

\item \textnormal{\textbf{Existence of inverse elements}}:
For every ${^{i}a}\in\sigmac^{s}$ with ${^{i}a}\neq {^{h}e}$, there exist exactly $n$
distinct elements ${^{j}b}\in\sigmac^{s}$ such that
\[
{^{i}a}\circledast {^{j}b}={^{h}e}={^{j}b}\circledast {^{i}a}.
\]
The identity element ${^{h}e}$ is its own (unique) inverse.
\end{enumerate}
\end{definition}

\subsection{Signed-Semigroup}
\begin{definition}
A signed-semigroup $(\sigmac^{s},\circledast)$ is a non-empty set $\sigmac^{s}$ with a binary
operation $\circledast$ such that:
\begin{enumerate}[label=\textnormal{\arabic*.}]
\item \textnormal{\textbf{Internal operation}}: For every ${^{i}a}$ and ${^{j}b}$ in the set $\sigmac^{s}$ operated under $\circledast$, the result ${^{k}c}$ always belongs to the same set $\sigmac^{s}$. That is:
\begin{equation*}
    {^{i}a} \circledast {^{j}b} = {^{k}c} \quad {^{k}c} \in \sigmac^{s}
\end{equation*}
\item \textnormal{\textbf{Signed-Associativity}}\label{s_almost_associativity_2}:
For every ${^{i}a}$, ${^{j}b}$, and ${^{k}c}$ in $\sigmac^{s}$, the operation $\circledast$ satisfies
\begin{equation*}
    {^{i}a} \circledast ({^{j}b} \circledast {^{k}c})
    =
    ({^{i}a} \circledast {^{j}b}) \circledast {^{k}c}
\end{equation*}
whenever at least two of the signs coincide, that is, whenever
\[
|\{i,j,k\}| \le 2.
\]
When one of the operands is the identity, associativity holds trivially (Remark \ref{assoc-with-identity}); hence for $s \leq 2$ associativity holds for all triples.
\end{enumerate}
A signed-semigroup is always a semigroup when $s \leq 2$ because the classical associativity holds.
\end{definition}

\begin{remark}\label{assoc-with-identity}
In the presence of an identity element ${^{h}e}$, associativity is trivial whenever
one of the operands is ${^{h}e}$. Indeed, for any $x,y\in\sigmac^{s}$,
\[
(x\circledast {^{h}e})\circledast y
=
x\circledast({^{h}e}\circledast y)
=
x\circledast y,
\]
and similarly
\[
({^{h}e}\circledast x)\circledast y
=
{^{h}e}\circledast(x\circledast y)
=
x\circledast y.
\]
Consequently, any failure of full associativity can only arise from expressions
involving three non-identity elements.
\end{remark}

\subsection{Signed-Monoid}
\begin{definition}
A signed-monoid $(\sigmac^{s}, \circledast)$ is a signed-semigroup
$(\sigmac^{s}, \circledast)$ that additionally satisfies:
\begin{enumerate}[label=\textnormal{\arabic*.}]
\item \textnormal{\textbf{Existence of identity element.}}
There exists ${^{h}e}\in\sigmac^{s}$ (with $h\in D$ and $e\in P$) such that for every
${^{i}a}\in\sigmac^{s}$,
\[
{^{h}e}\circledast {^{i}a}={^{i}a}={^{i}a}\circledast {^{h}e}.
\]

\end{enumerate}
A signed-monoid is always a monoid when $s \leq 2$, because the classical associativity holds.
\end{definition}
\subsection{$n$i-Signed-Group}
\begin{definition}
A $n$i-signed-group $(\sigmac^{s}, \circledast)$ is a $n$i-loop $(\sigmac^{s}, \circledast)$ that additionally satisfies:
\begin{enumerate}[label=\textnormal{\arabic*.}]
\item \textnormal{\textbf{Signed-Associativity}}\label{s_almost_associativity_2b}:
For every ${^{i}a}$, ${^{j}b}$, and ${^{k}c}$ in $\sigmac^{s}$, the operation $\circledast$ satisfies
\begin{equation*}
    {^{i}a} \circledast ({^{j}b} \circledast {^{k}c})
    =
    ({^{i}a} \circledast {^{j}b}) \circledast {^{k}c}
\end{equation*}
whenever at least two of the signs coincide, that is, whenever
\[
|\{i,j,k\}| \le 2.
\]
When one of the operands is the identity, associativity holds trivially (Remark \ref{assoc-with-identity}); hence for $s \leq 2$ associativity holds for all triples.
\end{enumerate}
We will say that $(\sigmac^{s}, \circledast)$ is an \textbf{abelian $n$i-signed-group} if and only if $(\sigmac^{s}, \circledast)$ is a $n$i-signed-group and satisfies the commutative property, that is for every ${^{i}a}$ and ${^{j}b}$ in $\sigmac^{s}$:
\begin{equation*}
    {^{i}a} \circledast {^{j}b} = {^{j}b} \circledast {^{i}a}
\end{equation*}
An abelian group is a specific case of an abelian $n$i-signed-group with n=1, meaning the inverse element is unique.
\end{definition}
\begin{definition}
Let $\sigmac^{s}$ be a set. We define the absolute value function of an element $^{d}p \in \sigmac^{s}$, for a set $P$ and $D$ as stated in Definition \ref{sign_num} as:
\begin{equation*}
\begin{aligned}
\bigl | \quad \bigr |\colon \sigmac^{s} \rightarrow P\\
^{d}p \mapsto \bigl | ^{d}p \bigr |
\end{aligned}
\end{equation*}
which is expressed as:
\begin{equation*}
    \bigl | ^{d}p \bigr | = p
\end{equation*}
\end{definition}

\begin{assumption}\label{ass:P-ordered}
Throughout this work, the magnitude set $P$ is assumed to be a totally ordered cancellative commutative monoid
$(P,+,\le)$ with additive identity $w$, such that for all $a,b\in P$ the comparison $a\le b$ is decidable.
Moreover, whenever $a\ge b$, the difference $a-b$ denotes the unique element $c\in P$ such that
$b+c=a$ (cancellation difference).
\end{assumption}

\begin{remark}\label{remark:magnitude}
Throughout this work, the structure $P$ is interpreted as a \emph{magnitude space}:
its elements represent sizes without direction.
In particular, $P$ contains no nonzero additive inverses, so cancellation between
distinct elements of $\sigmac^{s}$ can only arise from the interaction of different
signs, and never from the internal structure of $P$ itself.
\end{remark}

\begin{assumption}\label{ass:P-no-additive-inverses}
The magnitude structure $(P,+)$ has no nonzero additive inverses: if $a,b\in P$ and $a+b=0$, then $a=b=0$.
\end{assumption}

\begin{definition}\label{addition}
We define the addition $\oplus$ of two elements $^{i}a$ and $^{j}b$ in $\sigmac^{s}$, for any $i,j \in D$ and $s$ the number of distinct signs in $\sigmac^{s}$, by:
\begin{equation*}
{^{i}a} \oplus {^{j}b} =
\begin{cases}
  {^{i}(a + b)}      & \text{if } i = j \\
  {^{i}(a - b)}      & \text{if } i \neq j \wedge |{^{i}a}| > |{^{j}b}| \\
  {^{j}(b - a)}      & \text{if } i \neq j \wedge |{^{i}a}| < |{^{j}b}| \\
  {^{0}0}            & \text{if } i \neq j \wedge |{^{i}a}| = |{^{j}b}|
\end{cases}
\end{equation*}
where $+$ denotes the addition in $P$, and $a-b$ denotes the (well-defined) difference
in $P$ whenever $a\ge b$, as ensured by the corresponding cases.
\end{definition}

\subsection{A minimal algebraic example: non-uniqueness of additive inverses}
A basic phenomenon that motivates multisign algebra is that, when $s>2$, the equation
\[
x \oplus {^{i}a} = {^{0}0}
\]
may admit \emph{multiple} solutions, in contrast with classical groups/rings where additive inverses are unique.

\begin{proposition}[Multiple additive inverses for $s>2$]
Let $s \ge 3$ and let ${^{i}a} \in \sigmac^{s}$ with $a \ne 0$ and $i \in D\setminus\{0\}$. Then the equation
\[
x \oplus {^{i}a} = {^{0}0}
\]
has exactly $s-1$ distinct solutions in $\sigmac^{s}$, namely
\[
x = {^{j}a} \quad \text{for every } j \in D\setminus\{0\} \text{ with } j \ne i.
\]
\end{proposition}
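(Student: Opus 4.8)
The plan is to solve the equation directly by writing the unknown as $x={^{d}p}$ for some sign $d\in D$ and magnitude $p\in P$, and then expanding $x\oplus{^{i}a}$ according to the four cases of Definition \ref{addition}, taking $x$ as the first operand and ${^{i}a}$ as the second. Since, by Definition \ref{def:zero-sign}, ${^{0}0}$ is the unique element of magnitude $0$, the equation $x\oplus{^{i}a}={^{0}0}$ forces the magnitude of the output to vanish and its sign to be $0$. So I would read off, in each of the four branches, exactly when the resulting magnitude equals $0$.

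First I would dispose of the case $d=i$. Here Definition \ref{addition} gives ${^{d}p}\oplus{^{i}a}={^{d}(p+a)}$, so a solution would require $p+a=0$. Because $a\neq0$, Assumption \ref{ass:P-no-additive-inverses} rules this out: $P$ has no nonzero additive inverses, so $p+a=0$ would force $a=0$, a contradiction. Hence no solution has sign $d=i$; in particular $x={^{0}0}$ is excluded, consistent with ${^{0}0}$ being the additive identity.

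Next I would treat $d\neq i$, splitting by the comparison of the magnitudes $p=|{^{d}p}|$ and $a=|{^{i}a}|$. If $p>a$ the result is ${^{d}(p-a)}$, which equals ${^{0}0}$ only if $p-a=0$, i.e.\ $p=a$, contradicting $p>a$; symmetrically, $p<a$ yields ${^{i}(a-p)}$ and the same contradiction. The only remaining branch, $p=a$, produces exactly ${^{0}0}$. Thus the solutions are precisely the pairs with $d\neq i$ and $p=a$. Since $a\neq0$, the zero-sign convention of Definition \ref{def:zero-sign} forbids $d=0$, so $d$ ranges over $D\setminus\{0\}$ with $d\neq i$.

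Finally I would count. The admissible signs form $D\setminus\{0,i\}$, which has $|D\setminus\{0\}|-1=s-1$ elements, and distinct signs give distinct elements ${^{d}a}$ because they share the common nonzero magnitude $a$. This yields exactly the $s-1$ solutions $x={^{j}a}$ with $j\in D\setminus\{0\}$ and $j\neq i$, as claimed. The argument is a finite case check rather than a deep one; the only step demanding genuine care is the $d=i$ branch, where the nonexistence of nonzero additive inverses in $P$ (Assumption \ref{ass:P-no-additive-inverses}) is precisely what blocks a spurious cancellation, and is therefore the hypothesis doing the real work.
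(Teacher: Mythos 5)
Your proof is correct and takes essentially the same approach as the paper — a direct case analysis of Definition \ref{addition} showing that a sum equals ${^{0}0}$ exactly when the two operands have different signs and equal magnitudes — and is in fact more explicit than the paper's converse step, since you identify Assumption \ref{ass:P-no-additive-inverses} as the fact that kills the same-sign case, which the paper leaves implicit. One small slip: ruling out $d=i$ does not ``in particular'' exclude $x={^{0}0}$ (that element has sign $0\ne i$); it is instead excluded by your $d\ne i$, $p<a$ branch, which returns ${^{i}a}\ne{^{0}0}$, so the argument stands as written.
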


\begin{proof}
Fix $j \in D\setminus\{0\}$ with $j \ne i$ and consider $x={^{j}a}$. By Definition \ref{addition}, since $i \ne j$ and $|{^{j}a}|=|{^{i}a}|$ (both magnitudes are $a$), we fall into the case
\[
{^{j}a} \oplus {^{i}a} = {^{0}0}.
\]
Hence $x={^{j}a}$ is a solution. Conversely, if $x={^{k}b}$ is any solution of $x \oplus {^{i}a} = {^{0}0}$, then by Definition \ref{addition} the only way to obtain ${^{0}0}$ from a sum of two nonzero elements is to have different signs and equal magnitudes, i.e., $k \ne i$ and $b=a$. Therefore $x={^{k}a}$ with $k \ne i$, yielding exactly $s-1$ solutions.
\end{proof}

\begin{corollary}[Not a group under $\oplus$ for $s>2$]
For $s \ge 3$, the algebraic structure $(\sigmac^{s},\oplus)$ cannot be a group (nor a ring in the classical sense), because the additive inverse of an element is not unique.
\end{corollary}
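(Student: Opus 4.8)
The plan is to derive the corollary directly from the preceding Proposition, using the uniqueness-of-inverses argument already recorded in Section~\ref{related_concepts}. Recall that that argument shows the following: in any set equipped with an \emph{associative} binary operation admitting a two-sided identity, each element has at most one two-sided inverse. A group is, by definition, such a structure in which every element actually possesses an inverse, so in particular inverses in a group are unique. The strategy is therefore to exhibit an element of $\sigmac^{s}$ carrying two or more additive inverses and conclude, by contraposition, that $(\sigmac^{s},\oplus)$ fails the group axioms.

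First I would fix $s \ge 3$ and select any ${^{i}a}$ with $a \ne 0$ and $i \in D\setminus\{0\}$; such an element exists since $s \ge 3$. Next I would confirm that ${^{0}0}$ plays the role the Proposition assigns to it, namely that it is a two-sided additive identity: for $i \ne 0$ and $a \ne 0$, Definition~\ref{addition} gives ${^{0}0} \oplus {^{i}a} = {^{i}a}$ and ${^{i}a} \oplus {^{0}0} = {^{i}a}$ (the case of equal signs with the other summand having magnitude $0$, together with Definition~\ref{def:zero-sign}). I would also note that $\oplus$ is commutative, since the four cases of Definition~\ref{addition} are symmetric under exchanging the two operands (the equal-sign case uses commutativity of $+$ in $P$, and the two strict-inequality cases map to one another); hence every solution $x$ of $x \oplus {^{i}a} = {^{0}0}$ is automatically a two-sided inverse of ${^{i}a}$. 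The Proposition then supplies exactly $s-1 \ge 2$ distinct such solutions, so ${^{i}a}$ has at least two additive inverses.

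With these facts in hand the conclusion is immediate. If $(\sigmac^{s},\oplus)$ were a group, then $\oplus$ would be associative and the Preliminaries argument would force each element to admit a unique inverse, contradicting the $s-1 \ge 2$ inverses just exhibited; hence $(\sigmac^{s},\oplus)$ is not a group. For the ring claim I would invoke the definition of a classical ring, whose additive reduct is required to be an abelian group under $\oplus$; the same element ${^{i}a}$ obstructs this, so $\sigmac^{s}$ cannot carry a classical ring structure with $\oplus$ as its addition.

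There is essentially no deep obstacle here, as the corollary is a formal consequence of the Proposition together with the uniqueness lemma. The only points requiring genuine (if routine) care are bookkeeping ones: verifying that ${^{0}0}$ is a two-sided identity, and that the solutions produced by the Proposition are two-sided rather than merely one-sided inverses. Both reduce to reading off the cases of Definition~\ref{addition} and its commutativity, so the argument is complete once these are checked.
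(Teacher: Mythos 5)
Your proposal is correct and takes essentially the same route the paper intends: the corollary appears there without its own proof, as an immediate consequence of the preceding Proposition combined with the uniqueness-of-inverses argument from the Preliminaries, which is exactly what you assemble (the identity and commutativity facts you re-verify are also already established in Theorem \ref{s-abelian_group_proof}, parts (b) and (e)). One minor slip in your bookkeeping: the identity check ${^{i}a}\oplus{^{0}0}={^{i}a}$ with $i\ne 0$, $a\ne 0$ falls under the \emph{unequal}-sign, larger-magnitude case of Definition \ref{addition} (by Definition \ref{def:zero-sign} the zero element has sign $0\ne i$), not the equal-sign case as your parenthetical states, but this does not affect the validity of the argument.
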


\noindent\textbf{Example.}
For $s=3$, the equation $x \oplus {^{1}1} = {^{0}0}$ has two distinct solutions: $x={^{2}1}$ and $x={^{3}1}$.

\begin{theorem}\label{s-abelian_group_proof}
The set $(\sigmac^{s},\oplus)$ forms an abelian $(s-1)$i-signed-group (equivalently, an $(s-1)$i-loop satisfying signed-associativity), with identity ${^{0}0}$.
\end{theorem}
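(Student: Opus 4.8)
The plan is to verify, one clause at a time, that $(\sigmac^{s},\oplus)$ satisfies the defining requirements of an abelian $(s-1)$i-signed-group: closure (internal operation), existence of the identity, the exact count of inverses, commutativity, and finally signed-associativity. All but the last are immediate consequences of Definition~\ref{addition} together with the zero convention of Definition~\ref{def:zero-sign} and Assumption~\ref{ass:P-no-additive-inverses}; signed-associativity is the only substantial step, so I would treat it last.

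For closure I would run through the branches of Definition~\ref{addition} and check each returns a legitimate element, i.e.\ one respecting ${^{d}p}={^{0}0}\iff p=0$. In the equal-sign branch ${^{i}(a+b)}$, Assumption~\ref{ass:P-no-additive-inverses} makes $a+b=0$ possible only when $a=b=0$ (both inputs equal to ${^{0}0}$); in the two strict-inequality branches the surviving magnitude is strictly positive and its sign is therefore nonzero; the equal-magnitude branch returns ${^{0}0}$. For the identity, note that $i\neq 0$ forces $a>0$ while $|{^{0}0}|=0$, so the pair $({^{0}0},{^{i}a})$ (resp.\ $({^{i}a},{^{0}0})$) falls into an unequal-sign branch and returns ${^{i}a}$ unchanged; hence ${^{0}0}$ is a two-sided identity. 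For the inverses, the preceding Proposition on multiple additive inverses (and, for every $s$, the same cancellation argument) shows each nonzero ${^{i}a}$ admits exactly $s-1$ solutions of $x\oplus{^{i}a}={^{0}0}$, namely ${^{j}a}$ with $j\in D\setminus\{0\}$, $j\neq i$, while ${^{0}0}$ is its own unique inverse; this is precisely the $n$i-loop inverse clause with $n=s-1$. Commutativity follows by inspecting Definition~\ref{addition}: the equal-sign branch uses commutativity of $+$ in $P$, and swapping operands in the unequal-sign branches merely interchanges the ``$>$'' and ``$<$'' cases while leaving the output fixed.

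The hard part is signed-associativity,
\[
{^{i}a}\oplus({^{j}b}\oplus{^{k}c})=({^{i}a}\oplus{^{j}b})\oplus{^{k}c}\quad\text{whenever }|\{i,j,k\}|\le 2 .
\]
By Remark~\ref{assoc-with-identity} I may assume all three operands are nonzero, so $i,j,k\in D\setminus\{0\}$ and $a,b,c>0$; the hypothesis $|\{i,j,k\}|\le 2$ then says the signs take at most two distinct nonzero values, say $p$ and $q$. My plan is to confine the whole computation to the two-sign slice
\[
S_{p,q}=\{{^{p}x}:x\in P\}\cup\{{^{q}y}:y\in P\}\cup\{{^{0}0}\},
\]
which is closed under $\oplus$ directly from Definition~\ref{addition}, and to prove that $\oplus$ is associative on $S_{p,q}$. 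This covers exactly the triples permitted by the clause, since every admissible triple lives in some $S_{p,q}$.

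To establish associativity on $S_{p,q}$ I would transport $\oplus$ into a genuine group. By Assumption~\ref{ass:P-ordered} the magnitude set $P$ is a totally ordered cancellative commutative monoid, hence embeds into its Grothendieck group $G$, a totally ordered abelian group whose nonnegative cone is $P$. Define $\phi\colon S_{p,q}\to G$ by $\phi({^{p}x})=x$, $\phi({^{q}y})=-y$, $\phi({^{0}0})=0$. A short check over the branches of Definition~\ref{addition} shows $\phi$ intertwines $\oplus$ with addition in $G$, i.e.\ $\phi(X\oplus Y)=\phi(X)+\phi(Y)$, and $\phi$ is injective because the total order separates $p$-signed (positive), $q$-signed (negative), and zero elements. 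Then for any $X,Y,Z\in S_{p,q}$,
\[
\phi\bigl((X\oplus Y)\oplus Z\bigr)=(\phi X+\phi Y)+\phi Z=\phi X+(\phi Y+\phi Z)=\phi\bigl(X\oplus(Y\oplus Z)\bigr),
\]
and injectivity of $\phi$ promotes this to the desired equality in $\sigmac^{s}$. The one delicate point I anticipate is the intertwining property of $\phi$: it requires matching each unequal-sign branch of Definition~\ref{addition}, across both orderings of the magnitudes, to the sign of the corresponding difference $x-y$ in $G$; everything else is formal. The all-equal case $i=j=k$ can alternatively be dispatched by hand, since it reduces directly to associativity of $+$ in $P$.
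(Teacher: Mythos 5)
Your proposal is correct, and for every clause except signed-associativity (closure, identity, the exact count of $s-1$ inverses, commutativity) it matches the paper's own case-by-case verification in substance. The genuine difference is the associativity step. The paper (part (d) of its proof, expanded in Appendix \ref{annex1}) argues by exhaustive enumeration of the sign patterns $i=j=k$, $i=j\neq k$, $j=k\neq i$, $i=k\neq j$, with magnitude comparisons inside each pattern, until both parenthesizations visibly agree. You instead observe that every admissible triple of nonzero operands lies in a two-sign slice $S_{p,q}$ closed under $\oplus$, and you transport that slice into the totally ordered Grothendieck group $G$ of $P$ via $\phi({^{p}x})=x$, $\phi({^{q}y})=-y$, $\phi({^{0}0})=0$, so that associativity is inherited from $G$ through an injective operation-preserving map. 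In effect you generalize the paper's Theorem \ref{isomorphism} (the $s=2$ isomorphism of $\sigmac_{\mathbb{R}_{\ge 0}}^{2}$ with $\mathbb{R}$) from $\mathbb{R}_{\ge 0}$ to an arbitrary magnitude monoid, and promote it from a concluding observation to the engine of the proof. Your route is shorter and more conceptual—it explains \emph{why} signed-associativity holds (each two-sign slice is classically an ordered abelian group) and replaces error-prone bookkeeping with a single intertwining check—but it leans on facts external to the paper: that a totally ordered cancellative commutative monoid with translation-invariant order embeds order-compatibly into its Grothendieck group (the compatibility reading of Assumption \ref{ass:P-ordered}, which the paper's appendix also uses implicitly when it rewrites $a>c-b$ as $a+b>c$), and on Assumption \ref{ass:P-no-additive-inverses} to keep $p$-signed and $q$-signed images separated for injectivity. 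The paper's route buys elementarity and self-containment: nothing beyond Definition \ref{addition} is invoked. Both are valid; just state explicitly that the degenerate pattern $p=q$ (all signs equal) is either dispatched by hand, as you note, or absorbed by choosing an auxiliary second sign $q\neq p$ whenever $s\ge 2$.
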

\begin{proof} 

\textbf{(a) Internal operation proof:} Based on the definition of internal operation and using the addition $\oplus$, we have that:

\begin{equation*}
    {^{i}a} \oplus {^{j}b} = {^{k}c}
\end{equation*}
This statement must be proven in the following cases:

\textbf{Case 1:} if $i=j$ then:
\begin{align*}
    {^{i}a} \oplus {^{j}b} &= {^{k}c}\\
    {^{i}(a+b)} &= {^{k}c}
\end{align*}
then, since $i \in {D}$, $a+b \in P$, $k=i$ and $c=a+b$, it follows that ${^{k}c} \in \sigmac^{s}$, so the property of internal operation is satisfied in this case.

\textbf{Case 2:} if $i \neq j$ and $|{^{i}a}|>|{^{j}b}|$, then:
\begin{align*}
    {^{i}a} \oplus {^{j}b} &= {^{k}c}\\
    {^{i}(a-b)} &= {^{k}c}
\end{align*}
then, since $i \in {D}$, $a-b \in P$, $k=i$ and $c=a-b$, it follows that ${^{k}c} \in \sigmac^{s}$, so the property of internal operation is satisfied in this case.

\textbf{Case 3:} if $i \neq j$ and $|{^{i}a}|<|{^{j}b}|$, then:
\begin{align*}
    {^{i}a} \oplus {^{j}b} &= {^{k}c}\\
    {^{j}(b-a)} &= {^{k}c}
\end{align*}
then, since $j \in {D}$, $b-a \in P$, $k=j$ and $c=b-a$, it follows that ${^{k}c} \in \sigmac^{s}$, so the property of internal operation is satisfied in this case.

\textbf{Case 4:} if $i \neq j$ and $|{^{i}a}|=|{^{j}b}|$, then:
\begin{align*}
    {^{i}a} \oplus {^{j}b} &= {^{k}c}\\
    {^{0}0} &= {^{k}c}
\end{align*}
then, since $0 \in {D}$, $0 \in P$, $k=0$ and $c=0$, it follows that ${^{k}c} \in \sigmac^{s}$, so the property of internal operation is satisfied in this case.

\textbf{(b) Existence of identity element proof}: We define the identity element for the addition $\oplus$ as the number ${^{0}0}$. Then, based on the definition of an identity element, we have:
\begin{equation*}
    {^{i}a} \oplus {^{0}0} = {^{i}a} = {^{0}0} \oplus {^{i}a}
\end{equation*}
Then, if $i \neq 0$ and $|{^{i}a}| > |{^{0}0}|$, we have:
\begin{align*}
{^{i}a} \oplus {^{0}0} &= {^{i}a} = {^{0}0} \oplus {^{i}a}\\
{^{i}(a-0)} &= {^{i}a} = {^{i}(a-0)}\\
    {^{i}a} &= {^{i}a} = {^{i}a}
\end{align*}
Thus, we proved that ${^{i}a} \oplus {^{0}0}$ and ${^{0}0} \oplus {^{i}a}$ is always equal to ${^{i}a}$.

\textbf{(c) Existence of inverse elements.}
Let ${^{i}a}\in\sigmac^{s}$ with ${^{i}a}\neq {^{0}0}$.
By Definition~\ref{addition}, the equality
\[
{^{i}a}\oplus {^{j}b}={^{0}0}
\]
holds if and only if $i\neq j$ and $a=b$.
Hence the additive inverses of ${^{i}a}$ are exactly the elements ${^{j}a}$ with
$j\in D\setminus\{0\}$ and $j\neq i$, yielding exactly $s-1$ distinct inverses.

Therefore, $(\sigmac^{s},\oplus)$ is an $(s-1)$i-loop in the sense of
Definition~\ref{def:ni-loop}, where every nonzero element admits exactly $s-1$
additive inverses, while the identity element has itself as its unique inverse.

\textbf{(d) Signed-Associativity proof}: Based on the definition of signed-associativity and using the addition $\oplus$, we have that:
\begin{align*}
    ({^{i}a} \oplus {^{j}b}) \oplus {^{k}c} =  {^{i}a} \oplus ({^{j}b} \oplus {^{k}c})
\end{align*}
Appendix \ref{annex1} enumerates all sign-patterns with $|{i,j,k}| \leq 2$ and verifies associativity case-by-case under Definition \ref{addition}. Thus, we proved that the addition $\oplus$ satisfies the signed-associative property.
This demonstrates that set $(\sigmac^{s},\oplus)$ is a signed-semigroup, a signed-monoid, and $n$i-signed-group.

\textbf{(e) Commutative proof}: Based on the definition of commutativity and using the addition $\oplus$, we have that:
\begin{align*}
    {^{i}a} \oplus {^{j}b} = {^{j}b} \oplus {^{i}a}
\end{align*}
This statement must be proven in the following cases:

\textbf{Case 1:} if $i = j$, then by the definition of addition $\oplus$, we have:
\begin{align*}
    {^{i}a} \oplus {^{j}b} &= {^{j}b} \oplus {^{i}a}\\
    {^{i}(a+b)} &= {^{j}(b+a)}\\
    {^{i}(a+b)} &= {^{i}(b+a)}\\
    {^{i}(a+b)} &= {^{i}(a+b)}
\end{align*}

\textbf{Case 2:} if $i \neq j$ and $|{^{i}a}| > |{^{j}b}|$, then by the definition of addition $\oplus$, we have:
\begin{align*}
    {^{i}a} \oplus {^{j}b} &= {^{j}b} \oplus {^{i}a}\\
    {^{i}(a-b)} &= {^{i}(a-b)}
\end{align*}

\textbf{Case 3:} if $i \neq j$ and $|{^{i}a}| < |{^{j}b}|$, then by the definition of addition $\oplus$, we have:
\begin{align*}
    {^{i}a} \oplus {^{j}b} &= {^{j}b} \oplus {^{i}a}\\
    {^{j}(b-a)} &= {^{j}(b-a)}
\end{align*}

\textbf{Case 4:} if $i \neq j$ and $|{^{i}a}| = |{^{j}b}|$, then by the definition of addition $\oplus$, we have:
\begin{align*}
    {^{i}a} \oplus {^{j}b} &= {^{j}b} \oplus {^{i}a}\\
    {^{0}0} &= {^{0}0}
\end{align*}
Therefore, it is shown that $(\sigmac^{s}, \oplus)$ forms an abelian $n$i-signed group.
\end{proof}

\begin{remark}\label{classical-s2-local}
For $s=2$, every nonzero element admits a unique additive inverse under $\oplus$.
Hence $(\sigmac^{2},\oplus)$ behaves like a classical abelian group with respect to
addition. The connection with the usual signed number systems is made explicit in
Section~\ref{real_numbers}.
\end{remark}

\begin{definition}\label{multiplication}
Let $\sigmac^{s}$ be a set, the binary multiplicative operation $\otimes$ of two elements $^{i}a$ and $^{j}b$ in $\sigmac^{s}$, for any $i,j \in D$ with $s$ the number of distinct signs in $\sigmac^{s}$, as stated in Definition \ref{sign_num}, is denoted by:
\begin{equation*}
{^{i}a} \otimes {^{j}b} =
\begin{cases}
{^{0}0}   & \text{if } {^{i}a}={^{0}0} \vee {^{j}b}={^{0}0}\\
{^{i+j-1-s}(ab)}   & \text{if } (i+j-1>s) \wedge {^{i}a} \neq {^{0}0} \wedge {^{j}b} \neq {^{0}0}\\
{^{i+j-1}(ab)}   & \text{if } (i+j-1 \leq s) \wedge {^{i}a} \neq {^{0}0} \wedge {^{j}b} \neq {^{0}0}
\end{cases}
\end{equation*}
where $ab$ is the multiplication operation in $P$.
\end{definition}

\begin{remark}[Conceptual relation with modular sign arithmetic]
\label{remark:modular-signs}
The sign update rule in Definition~\ref{multiplication} is equivalent to addition in the cyclic group
$\mathbb{Z}/s\mathbb{Z}$ on the nonzero signs, after identifying the sign labels
$\{1,\dots,s\}$ with the residue classes $\{0,\dots,s-1\}$ via $d \mapsto d-1$.
Equivalently, for nonzero operands one may write
\[
\operatorname{sign}\!\bigl({^{i}a}\otimes {^{j}b}\bigr)
=
1+\bigl((i-1)+(j-1)\bmod s\bigr).
\]
We keep the explicit piecewise formulation in Definition~\ref{multiplication} to match the
presentation of subsequent proofs and to emphasize that the main novelty of the framework
lies in the additive operation $\oplus$ (multiple additive inverses for $s>2$), not in the
choice of a group structure on the sign labels.
\end{remark}

\subsection{A multisign equation involving both $\oplus$ and $\otimes$}\label{subsec:example-mixed}

Let $s\ge 3$ and consider the equation in $\sigmac^{s}$:
\[
x \otimes {^{1}2} \;\oplus\; {^{1}2} \;=\; {^{0}0}.
\]
We determine all its solutions.

\begin{proof}
Let $x={^{j}a}$ with $a\neq 0$.
By Definition~\ref{multiplication},
\[
x \otimes {^{1}2} = {^{j}(2a)}.
\]
Hence the equation becomes
\[
{^{j}(2a)} \oplus {^{1}2} = {^{0}0}.
\]
By Definition~\ref{addition}, this holds if and only if $j\neq 1$ and $2a=2$, i.e.\ $a=1$.
Therefore,
\[
x={^{j}1}\quad\text{for every } j\in D\setminus\{0,1\}.
\]
Thus the equation admits exactly $s-1$ distinct solutions.
\end{proof}

\begin{corollary}
For $s\ge 3$, the equation
\[
x\otimes {^{1}m}\;\oplus\; {^{1}m}={^{0}0}
\]
with $m\in P\setminus\{0\}$ has exactly $s-1$ solutions, namely
$x={^{j}1}$ for every $j\in D\setminus\{0,1\}$, assuming $P$ allows cancellation of the factor $m$.
\end{corollary}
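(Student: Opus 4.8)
The plan is to mirror the mixed-equation argument of Section~\ref{subsec:example-mixed}, simply carrying the arbitrary magnitude $m$ through the step where $2$ was cancelled before. First I would dispose of the trivial candidate $x={^{0}0}$: by Definition~\ref{multiplication} we have ${^{0}0}\otimes{^{1}m}={^{0}0}$, whence ${^{0}0}\oplus{^{1}m}={^{1}m}\neq{^{0}0}$ because $m\neq0$. So every solution must have nonzero magnitude, and I may write $x={^{j}a}$ with $a\neq0$; by Definition~\ref{def:zero-sign} this forces $j\in D\setminus\{0\}$.

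Next I would evaluate the product via Definition~\ref{multiplication}. With first sign $j$ and second sign $1$, the updated sign is $j+1-1=j$, and since $j\le s$ we land in the branch $i+j-1\le s$; hence $x\otimes{^{1}m}={^{j}(am)}$, where $am$ is the product in $P$ and is nonzero because $(P\setminus\{0\},\cdot)$ is a group. The original equation then collapses to ${^{j}(am)}\oplus{^{1}m}={^{0}0}$.

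I would then apply Definition~\ref{addition}: a sum of two nonzero multisign numbers equals ${^{0}0}$ precisely in the fourth case, namely when the signs differ and the magnitudes coincide. This yields the two conditions $j\neq1$ and $am=m$. The only substantive step is the cancellation of $m$: since $m\in P\setminus\{0\}$, Definition~\ref{semiring_with_invertible_multiplication}(ii) guarantees that $m$ is invertible in the abelian group $(P\setminus\{0\},\cdot)$, so multiplying $am=m$ on the right by $m^{-1}$ gives $a=1$, the multiplicative identity of $P$. This is exactly the invoked hypothesis that $P$ allows cancellation of the factor $m$. Finally I would count the admissible signs: $j$ ranges over $D\setminus\{0,1\}=\{2,\dots,s\}$, producing the solutions $x={^{j}1}$ and exactly $s-1$ of them.

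The proof is almost entirely mechanical, and the single point that deserves attention—and the only place where the stated cancellation assumption is genuinely used—is the passage from $am=m$ to $a=1$. Under Definition~\ref{semiring_with_invertible_multiplication} this is automatic, so the hypothesis is redundant in the present setting; I would note, however, that if one later weakens $P$ to a magnitude monoid lacking full multiplicative invertibility, cancellability of $m$ must be retained as an explicit condition, precisely as the statement records.
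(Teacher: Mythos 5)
Your proposal is correct and follows essentially the same route as the paper, which proves the special case $m=2$ in Section~\ref{subsec:example-mixed} and lets the corollary follow by the identical argument with $2$ replaced by $m$: compute $x\otimes{^{1}m}={^{j}(am)}$, invoke the fourth case of Definition~\ref{addition} to get $j\neq 1$ and $am=m$, cancel $m$, and count $j\in D\setminus\{0,1\}$. Your two additions—explicitly ruling out $x={^{0}0}$ and observing that the cancellation hypothesis is automatic under Definition~\ref{semiring_with_invertible_multiplication}(ii)—are sound refinements of the same argument rather than a different approach.
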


\begin{theorem}
\label{1-abelian_multiplication_proof}
The set $(\sigmac^{s},\otimes)$ forms a signed-semigroup and signed-monoid, and the set $(\sigmac^{s}\setminus\{{^{0}0}\},\otimes)$ forms a 1i-loop, signed-semigroup, signed-monoid, 1i-signed-group and an abelian 1i-signed-group.
\end{theorem}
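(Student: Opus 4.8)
The plan is to verify the defining axioms of each structure directly from Definition~\ref{multiplication}, exploiting the modular reformulation of the sign rule recorded in Remark~\ref{remark:modular-signs}. Under the bijection $d\mapsto d-1$ identifying the nonzero signs $\{1,\dots,s\}$ with $\mathbb{Z}/s\mathbb{Z}$, a nonzero product ${^{i}a}\otimes{^{j}b}$ has sign $1+((i-1)+(j-1)\bmod s)$ and magnitude $ab$. First I would establish closure: if either operand is ${^{0}0}$ the product is ${^{0}0}\in\sigmac^{s}$; if both are nonzero then $ab\neq 0$ because $(P\setminus\{0\},\cdot)$ is an abelian group with no zero divisors (Definition~\ref{semiring_with_invertible_multiplication}), and the sign $1+((i-1)+(j-1)\bmod s)$ lies in $\{1,\dots,s\}=D\setminus\{0\}$, so the product is a well-defined nonzero element. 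This single argument gives closure both on $\sigmac^{s}$ and on the punctured set $\sigmac^{s}\setminus\{{^{0}0}\}$.

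Next I would identify ${^{1}1}$ as the two-sided identity. For nonzero ${^{i}a}$, taking $j=1$ gives sign $1+((i-1)\bmod s)=i$ and magnitude $1\cdot a=a$, so ${^{1}1}\otimes{^{i}a}={^{i}a}$; the zero case and the symmetric product are immediate, and ${^{1}1}\in\sigmac^{s}\setminus\{{^{0}0}\}$ since $1\neq 0$ in $P$. The crucial structural point, and the reason $\otimes$ behaves more tamely than $\oplus$, is that multiplication is \emph{fully} associative: for nonzero operands both groupings yield sign $1+((i-1)+(j-1)+(k-1)\bmod s)$ by associativity of addition in $\mathbb{Z}/s\mathbb{Z}$, and magnitude $abc$ by associativity in $P$; if any factor is ${^{0}0}$ both sides collapse to ${^{0}0}$. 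Since full associativity implies the signed-associativity clause $|\{i,j,k\}|\le 2$ a fortiori, no case analysis on sign patterns is needed, and $(\sigmac^{s},\otimes)$ is at once a signed-semigroup and, with identity ${^{1}1}$, a signed-monoid.

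For the structures on $\sigmac^{s}\setminus\{{^{0}0}\}$ it remains to produce inverses. Given nonzero ${^{i}a}$, I would solve ${^{i}a}\otimes{^{j}b}={^{1}1}$ by splitting into magnitude and sign conditions: $ab=1$ forces $b=a^{-1}$, the unique multiplicative inverse in the abelian group $(P\setminus\{0\},\cdot)$, while $(i-1)+(j-1)\equiv 0\pmod s$ determines $j-1$ uniquely in $\{0,\dots,s-1\}$, hence a unique $j\in D\setminus\{0\}$ (explicitly $j=1$ when $i=1$, and $j=s-i+2$ when $i\ge 2$). Thus every nonzero element has exactly one inverse, giving the 1i-loop property, and together with signed-associativity this yields the 1i-signed-group. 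Finally, commutativity follows because both the modular sum $(i-1)+(j-1)$ and the magnitude product $ab$ are symmetric in the two operands, so ${^{i}a}\otimes{^{j}b}={^{j}b}\otimes{^{i}a}$, upgrading the structure to an abelian 1i-signed-group.

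I do not anticipate a genuine obstacle: the only points demanding care are that $ab\neq 0$ for nonzero magnitudes, needed for closure of the punctured set, and that the inverse is \emph{unique}, which rests jointly on uniqueness of the multiplicative inverse in $P$ and on uniqueness of the solution of $(i-1)+(j-1)\equiv 0\pmod s$. The sharp contrast with Theorem~\ref{s-abelian_group_proof} is that here associativity holds unconditionally, so the delicate sign-dependent relaxation of associativity plays no role and non-uniqueness of inverses does not occur.
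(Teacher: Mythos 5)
Your proposal is correct, and all the required axioms (closure, identity, unique inverses, signed-associativity, commutativity) are genuinely established; but your route differs from the paper's in the step that carries most of the weight. The paper works directly with the piecewise definition of $\otimes$: closure, identity and commutativity are checked by splitting into the two threshold cases $i+j-1>s$ and $i+j-1\le s$, inverses are built by the same explicit formula you give ($j=1$ for $i=1$, $j=s+2-i$ otherwise), and signed-associativity is delegated to Appendix~\ref{annex2}, where an auxiliary function $\phi$ is introduced and six threshold configurations are enumerated one by one. You instead invoke the identification of Remark~\ref{remark:modular-signs}, under which the sign rule becomes addition in $\mathbb{Z}/s\mathbb{Z}$, so that full associativity of $\otimes$ follows in one line from associativity of $(\mathbb{Z}/s\mathbb{Z},+)$ and of multiplication in $P$, with signed-associativity then holding a fortiori; uniqueness of the inverse sign likewise becomes uniqueness of the solution of $(i-1)+(j-1)\equiv 0 \pmod s$. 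Note that the paper's appendix in fact also proves unrestricted associativity (its case analysis is over thresholds, not over sign-coincidence patterns), so the two arguments prove the same statement; yours simply compresses the six cases into a structural observation. What your approach buys is brevity and a transparent reason why $\otimes$ is tame while $\oplus$ is not; what the paper's buys is self-containedness (it never leans on the equivalence asserted in the remark, which it states without proof) and uniformity of style with Appendix~\ref{annex1}, where no such modular shortcut exists for $\oplus$. If you wanted your version to be fully airtight on the paper's own terms, you would add the two-line verification that the modular formula agrees with the piecewise definition (for $i,j\in\{1,\dots,s\}$ one has $i+j-2\le 2s-2$, so reducing mod $s$ subtracts $s$ exactly when $i+j-1>s$), which is the content of that remark.
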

\begin{proof}

\textbf{(a) Internal operation proof}: Based on the definition of internal operation and using the multiplication $\otimes$, we have that:
\begin{equation*}
    {^{i}a} \otimes {^{j}b} = {^{k}c}
\end{equation*}
This statement must be proven in the following cases:

\textbf{Case 1:} if $i+j-1>s$, ${^{i}a} \neq {^{0}0}$ and ${^{j}b} \neq {^{0}0}$, then:
\begin{align*}
    {^{i}a} \otimes {^{j}b} &= {^{k}c}\\
    {^{i+j-1-s}(ab)} &= {^{k}c}
\end{align*}
then, since $i+j-1-s>0$, then $i+j-1-s \in D$, $ab \in P$, $k=i+j-1-s$ and $c=ab$, it follows that ${^{k}c} \in \sigmac^{s}\setminus\{{^{0}0}\}$ when $(\sigmac^{s}\setminus\{{^{0}0}\},\otimes)$, so the property of internal operation is satisfied in this case for both sets.

\textbf{Case 2:} if $i+j-1 \leq s$, ${^{i}a} \neq {^{0}0}$ and ${^{j}b} \neq {^{0}0}$, then:
\begin{align*}
    {^{i}a} \otimes {^{j}b} &= {^{k}c}\\
    {^{i+j-1}(ab)} &= {^{k}c}
\end{align*}
then, since $i+j-1>0$, then $i+j-1 \in D$, $ab \in P$, $k=i+j-1$ and $c=ab$, it follows that ${^{k}c} \in \sigmac^{s}\setminus\{{^{0}0}\}$ when $(\sigmac^{s}\setminus\{{^{0}0}\},\otimes)$, so the property of internal operation is satisfied in this case for both sets. The case when ${^{i}a}={^{0}0} \vee {^{j}b}={^{0}0}$ is trivial, since only occurs in $(\sigmac^{s},\otimes)$. By the definition of the multiplication $\otimes$, this case satisfies the internal operation too.

\textbf{(b) Existence of identity element proof}: We define the identity element for the multiplication $\otimes$ as the number ${^{1}1}$. Then, based on the definition of an identity element, we have:
\begin{equation*}
    {^{i}a} \otimes {^{1}1} = {^{i}a} = {^{1}1} \otimes {^{i}a}
\end{equation*}
Then, since $i + j - 1$ is always less than or equal to $s$ with $j=1$ and with $\cdot$ the multiplication operation in $P$, we have:
\begin{align*}
    {^{i}a} \otimes {^{1}1} = {^{i+1-1}(a \cdot 1)} &= {^{1+i-1}(1 \cdot a)} = {^{1}1} \otimes {^{i}a}\\
    {^{i}a} \otimes {^{1}1} = {^{i}a} &= {^{i}a} = {^{1}1} \otimes {^{i}a}
\end{align*}
Thus, we proved that ${^{i}a} \otimes {^{1}1}$ and ${^{1}1} \otimes {^{i}a}$ is always equal to ${^{i}a}$.

\textbf{(c) Existence of inverse element proof}:  
Let ${^{i}a}\in\sigmac^{s}\setminus\{{^{0}0}\}$ with $a\neq 0$ and $i\in\{1,\dots,s\}$.  
We prove there exists a \emph{unique} ${^{j}b}\in\sigmac^{s}\setminus\{{^{0}0}\}$ such that
\[
{^{i}a}\otimes {^{j}b}={^{1}1}.
\]
Necessarily $ab=1$, hence $b=a^{-1}$ in $(P\setminus\{w\},\cdot)$. It remains to choose $j$ so that
the resulting sign equals $1$.

\medskip
\noindent\emph{If $i=1$:} take $j=1$. Then $i+j-1=1\le s$ and
\[
{^{1}a}\otimes {^{1}a^{-1}}={^{1}1}.
\]

\medskip
\noindent\emph{If $i\in\{2,\dots,s\}$:} set $j=s+2-i$. Then $1\le j\le s$ and
$i+j-1=s+1>s$, so the third branch of Definition~\ref{multiplication} applies and
\[
i+j-1-s = 1.
\]
Therefore
\[
{^{i}a}\otimes {^{s+2-i}a^{-1}}={^{1}1}.
\]

\medskip
\noindent\emph{Uniqueness:} if ${^{i}a}\otimes {^{j}b}={^{1}1}$, then necessarily $b=a^{-1}$.
Moreover, the sign rule in Definition~\ref{multiplication} forces $j=1$ when $i=1$, and
$j=s+2-i$ when $i\in\{2,\dots,s\}$. Hence the inverse is unique.

\medskip
Therefore, every nonzero element of $\sigmac^{s}$ admits exactly one
multiplicative inverse, and the set
\[
(\sigmac^{s}\setminus\{{^{0}0}\},\otimes)
\]
forms a $1$i-loop.

\textbf{(d) Signed-Associativity proof}: Based on the definition of signed-associativity and using the multiplication $\otimes$, we have that:
\begin{align*}
    ({^{i}a} \otimes {^{j}b}) \otimes {^{k}c} =  {^{i}a} \otimes ({^{j}b} \otimes {^{k}c})
\end{align*}
For $(\sigmac^{s},\otimes)$ when $i=0$ or $j=0$ or $k=0$, the proof is trivial, as the final result is always ${^{0}0}$ on both sides of the equation.
In both cases, $(\sigmac^{s},\otimes)$ and $(\sigmac^{s}\setminus\{{^{0}0}\},\otimes)$, for all $i,j,k \in D$ that satisfy the general condition $1 \leq i,j,k \leq s$ and also satisfy the conditions presented in the cases of Appendix \ref{annex2}. Thus, we proved that the multiplication $\otimes$ satisfies the signed-associative property.
This demonstrates that set $(\sigmac^{s},\otimes)$ is a signed-semigroup and a signed-monoid, and that $(\sigmac^{s}\setminus\{{^{0}0}\},\otimes)$ is a signed-semigroup, signed-monoid and a 1i-signed-group.

\textbf{(e) Commutative proof}: Based on the definition of commutative and using the multiplication $\otimes$, we have that:
\begin{equation}\label{multiplication_commutative}
    {^{i}a} \otimes {^{j}b} = {^{j}b} \otimes {^{i}a}
\end{equation}
This statement must be proven in the following cases:

\textbf{Case 1:} if $i+j-1>s$, ${^{i}a} \neq {^{0}0}$ and ${^{j}b} \neq {^{0}0}$, then by the definition of multiplication $\otimes$, we have:
\begin{align*}
    {^{i}a} \otimes {^{j}b} &= {^{j}b} \otimes {^{i}a}\\
    {^{i+j-1-s}(ab)} &= {^{j+i-1-s}(ba)}\\
    {^{i+j-1-s}(ab)} &= {^{i+j-1-s}(ba)}\\
    {^{i+j-1-s}(ab)} &= {^{i+j-1-s}(ab)}
\end{align*}
This demonstrates the commutative of multiplication $\otimes$ for each ${^{i}a}$ and ${^{j}b}$ in case 1.

\textbf{Case 2:} if $i+j-1 \leq s$, ${^{i}a} \neq {^{0}0}$ and ${^{j}b} \neq {^{0}0}$, then by the definition of multiplication $\otimes$, we have:
\begin{align*}
    {^{i}a} \otimes {^{j}b} &= {^{j}b} \otimes {^{i}a}\\
    {^{i+j-1}(ab)} &= {^{j+i-1}(ba)}\\
    {^{i+j-1}(ab)} &= {^{i+j-1}(ba)}\\
    {^{i+j-1}(ab)} &= {^{i+j-1}(ab)}
\end{align*}
This demonstrates the commutative of multiplication $\otimes$ for each ${^{i}a}$ and ${^{j}b}$ in case 2.
This demonstrates that $(\sigmac^{s}\setminus\{{^{0}0}\},\otimes)$ is an abelian 1i-signed-group.
\end{proof}
\begin{remark}\label{classical-multiplication-s2}
When $s=2$, the multiplicative structure of the multisign set reduces to the classical
notion of multiplication with a unique inverse. In particular, for $P=\mathbb{Q}_{\ge 0}$
(or $P=\mathbb{R}_{\ge 0}$), every nonzero element of
$\sigmac^{2}$ admits a unique multiplicative inverse under $\otimes$.

Under the isomorphism introduced in Section~\ref{real_numbers}, the operation $\otimes$
corresponds exactly to the usual multiplication on $\mathbb{Q}$ (respectively $\mathbb{R}$).
\end{remark}

\subsection{$n$i-Signed-Ring}
\begin{definition}\label{def-s-ring}
A $n$i-signed-ring $(\sigmac^{s},\oplus,\otimes)$ is a non-empty set $\sigmac^{s}$
equipped with two binary operations: addition $\oplus$ and multiplication $\otimes$,
where $(\sigmac^{s},\oplus)$ is an abelian $n$i-signed-group under addition,
$(\sigmac^{s},\otimes)$ is a signed-monoid under multiplication, and multiplication
is \textbf{distributive} with respect to addition, meaning that for every
${^{i}a}$, ${^{j}b}$ and ${^{k}c}$ in $\sigmac^{s}$,
\begin{align*}
    {^{i}a} \otimes ({^{j}b} \oplus {^{k}c})
    &= ({^{i}a} \otimes {^{j}b}) \oplus ({^{i}a} \otimes {^{k}c}).
\end{align*}
Here, $n$ denotes the number of additive inverses of each nonzero element under $\oplus$.
A $n$i-signed-ring $(\sigmac^{s},\oplus,\otimes)$ is called a
\textbf{commutative $n$i-signed-ring} if the multiplication $\otimes$ is commutative.
\end{definition}

\begin{theorem}\label{s-ring}
The set $\sigmac^{s}$ with the addition $\oplus$ and multiplication $\otimes$ defined in Definitions \ref{addition} and \ref{multiplication}, forms a commutative $n$i-signed-ring $(\sigmac^{s},\oplus,\otimes)$.
\end{theorem}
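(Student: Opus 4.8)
The plan is to notice that almost all of the required structure has already been established, so that the real content of the theorem collapses to a single verification. By \cref{s-abelian_group_proof}, $(\sigmac^{s},\oplus)$ is an abelian $(s-1)$i-signed-group with identity ${^{0}0}$, and by \cref{1-abelian_multiplication_proof}, $(\sigmac^{s},\otimes)$ is a signed-monoid with $\otimes$ commutative. Matching these against \cref{def-s-ring}, the only property left to prove is distributivity of $\otimes$ over $\oplus$. Moreover, since $\otimes$ is commutative, it suffices to establish the left distributive law
\[
{^{i}a}\otimes({^{j}b}\oplus{^{k}c})=({^{i}a}\otimes{^{j}b})\oplus({^{i}a}\otimes{^{k}c}),
\]
because the right-hand version then follows by symmetry.

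Before the case analysis I would record three facts that make the two sides align. First, for a fixed nonzero ${^{i}a}$, \cref{multiplication} sends a nonzero operand of sign $j$ to a product of sign $\phi_i(j):=1+((i-1)+(j-1)\bmod s)$ (\cref{remark:modular-signs}); since $\phi_i$ is a cyclic shift of $\{1,\dots,s\}$ it is a bijection, so $j=k\iff\phi_i(j)=\phi_i(k)$ and $j\neq k\iff\phi_i(j)\neq\phi_i(k)$. Second, multiplication in $P$ distributes over $+$ (\cref{semiring_with_invertible_multiplication}), and the same identity passes to the cancellation difference, i.e.\ $a(b-c)=ab-ac$ whenever $b\ge c$: if $c+d=b$ then $ac+ad=ab$, so $ad=a(b-c)$ is exactly $ab-ac$. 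Third, multiplication by a nonzero $a$ preserves the total order on magnitudes: if $b>c$ then $b=c+d$ with $d\neq 0$, whence $ab=ac+ad$ with $ad\neq 0$ (no zero divisors, since $P\setminus\{0\}$ is a multiplicative group), giving $ab>ac$; the cases $b<c$ and $b=c$ follow identically.

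With these in hand I would first dispatch the degenerate cases: if $a=0$ both sides equal ${^{0}0}$, and if one of ${^{j}b},{^{k}c}$ equals ${^{0}0}$ then $\oplus$ and $\otimes$ reduce on both sides to ${^{i}a}\otimes$ the surviving factor. Assuming $a,b,c\neq 0$, I would split according to the branch of \cref{addition} selected inside the parenthesis. When $j=k$, the inner sum is ${^{j}(b+c)}$, so the left side carries sign $\phi_i(j)$ and magnitude $a(b+c)=ab+ac$; on the right both products share the sign $\phi_i(j)=\phi_i(k)$ and their magnitudes add to the same value. When $j\neq k$, the magnitude comparison of $b$ and $c$ selects a branch of \cref{addition}, and by the order-preservation fact the comparison of $ab$ and $ac$ selects the \emph{matching} branch on the right, while $\phi_i$ guarantees the output signs agree (and, in the equal-magnitude subcase, that the two distinct signs collapse to ${^{0}0}$ on both sides). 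Closing every branch yields left distributivity and hence the theorem.

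The step I expect to demand the most care is the third preliminary fact — order-preservation under multiplication — because \cref{ass:P-ordered} only stipulates compatibility of the order with addition, not with multiplication. The argument must therefore reconstruct multiplicative order-compatibility from distributivity in $P$, the natural-order description $b>c\iff b=c+d$ with $d\neq 0$, and the absence of zero divisors inherited from the multiplicative group $(P\setminus\{0\},\cdot)$. This is precisely the hinge forcing the branch of $\oplus$ chosen on the left (governed by $b$ versus $c$) to coincide with the branch chosen on the right (governed by $ab$ versus $ac$); without it the case analysis would fail to close.
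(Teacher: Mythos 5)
Your proposal is correct and structurally matches the paper's own proof: the paper likewise cites Theorem~\ref{s-abelian_group_proof} for the additive structure and Theorem~\ref{1-abelian_multiplication_proof} for the multiplicative signed-monoid and commutativity, and then establishes left distributivity (the only distributive law that Definition~\ref{def-s-ring} actually demands, so your symmetry reduction is harmless but unnecessary) by exactly your case split---zero operands, $j=k$, and the three magnitude comparisons when $j\neq k$---carried out in Appendix~\ref{annex3}. Where you genuinely add value is in making explicit two facts that the appendix uses silently: that $j\neq k$ forces the product signs $\phi(i,j)\neq\phi(i,k)$ (injectivity of the cyclic sign shift), without which the ``different signs'' branch of $\oplus$ could not be invoked on the right-hand side, and that $b>c$ implies $ab>ac$; the paper simply asserts the latter (``because $a>0$ and $b>c$''), which is fine for $P=\mathbb{R}_{\ge 0}$ but is not among the stated axioms for an abstract magnitude structure, so your reconstruction of multiplicative monotonicity from cancellation differences, distributivity in $P$, and absence of zero divisors is exactly what is needed for the theorem at its stated level of generality. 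Two compressions in your write-up deserve a line each, though neither is a gap in substance: (i) from $ab=ac+ad$ with $ad\neq 0$ you still need that nonzero elements of $P$ are strictly positive---this follows since $y<0$ combined with the difference property of Assumption~\ref{ass:P-ordered} would produce a nonzero additive inverse, contradicting Assumption~\ref{ass:P-no-additive-inverses}---and cancellativity then upgrades $ac+ad\ge ac$ to a strict inequality; (ii) Theorem~\ref{1-abelian_multiplication_proof} states commutativity of $\otimes$ only on $\sigmac^{s}\setminus\{{^{0}0}\}$, and the paper spends a separate (trivial) step extending it to operands equal to ${^{0}0}$, a point your appeal to that theorem glosses over.
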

\begin{proof}

\textbf{(a) $(\sigmac^{s},\oplus)$ is an abelian $n$i-signed-group proof}: That was already proved in the proof of Theorem \ref{s-abelian_group_proof}.

\textbf{(b) $(\sigmac^{s},\otimes)$ is a signed-monoid proof}: That was already proved in the proof of Theorem \ref{1-abelian_multiplication_proof}.

\textbf{(c) $(\sigmac^{s},\otimes)$ is commutative proof}: We already proved that $(\sigmac^{s}\setminus\{{^{0}0}\},\otimes)$ is an abelian 1i-signed-group in the proof of Theorem \ref{1-abelian_multiplication_proof}. To complete that proof, we need to demonstrate that commutativity holds when the element ${^{0}0}$ is included. But this proof is trivial when $i=0$ or $j=0$ or $k=0$, as the final result is always ${^{0}0}$ on both sides of the Equation \eqref{multiplication_commutative}. Then, we conclude that $(\sigmac^{s},\otimes)$ is also commutative.

\textbf{(d) Distributivity proof}: Based on the definition of distributivity and using addition $\oplus$ and multiplication $\otimes$, we have that:
\begin{align*}
    {^{i}a} \otimes ({^{j}b} \oplus {^{k}c}) &= ({^{i}a} \otimes {^{j}b}) \oplus ({^{i}a} \otimes {^{k}c})
\end{align*}
We present the proofs of this statement in the cases of Appendix \ref{annex3}. Thus, we proved that the addition $\oplus$ and multiplication $\otimes$ satisfy the distributive property.
This demonstrates that $(\sigmac^{s},\oplus,\otimes)$ is a commutative $n$i-signed-ring.
\end{proof}

\begin{remark}\label{tuple-ring}
Following Theorem \ref{s-ring}, the set of tuples of the form $T = (\sigmac^{s_{1}}, \dots, \sigmac^{s_{z}})$, with $T_1 = (\sigmac_{1}^{s_{1}}, \dots, \sigmac_{1}^{s_{z}})$ and $T_2 = (\sigmac_{2}^{s_{1}}, \dots, \sigmac_{2}^{s_{z}})$, where $\sigmac_{i}^{s_{j}} \in \sigmac^{s_{j}}$, $T_1 \in T$ and $T_2 \in T$. The tuple addition operation is defined as $T_1 \oplus_{T} T_2 = (\sigmac_{1}^{s_{1}} \oplus \sigmac_{2}^{s_{1}}, \sigmac_{1}^{s_{2}} \oplus \sigmac_{2}^{s_{2}}, \dots, \sigmac_{1}^{s_{z}} \oplus \sigmac_{2}^{s_{z}})$, and the tuple multiplication operation is defined as $T_1 \otimes_{T} T_2 = (\sigmac_{1}^{s_{1}} \otimes \sigmac_{2}^{s_{1}}, \sigmac_{1}^{s_{2}} \otimes \sigmac_{2}^{s_{2}}, \dots, \sigmac_{1}^{s_{z}} \otimes \sigmac_{2}^{s_{z}})$. Under these operations, the triplet $(T, \oplus_{T}, \otimes_{T})$ forms a commutative $n$i-signed-ring, where $n = (s_{1} - 1) \cdot (s_{2} - 1) \cdot \dots \cdot (s_{z} - 1)$, and the identity element for addition is $0_{T} = ({^{0}0}, {^{0}0}, \dots, {^{0}0})$.
\end{remark}

\begin{remark}\label{classical-integers-s2}
When $s=2$, the multisign construction reproduces the classical behavior of the integers
under addition. In this case, each nonzero element admits a unique additive inverse,
and the operation $\oplus$ behaves as the usual integer addition.

More precisely, if one considers the multisign set
$\sigmac_{\mathbb{Z}_{\ge 0}}^{2}$ and restricts attention to the additive structure,
then $(\sigmac_{\mathbb{Z}_{\ge 0}}^{2},\oplus)$ is isomorphic, as an abelian group,
to $(\mathbb{Z},+)$.
\end{remark}

\subsection{$n$i-Signed-Field}
\begin{definition}
We define a \textbf{$n$i-signed-field} $F$ as a commutative $n$i-signed-ring $R$ where the identity elements of addition $\oplus$ and multiplication $\otimes$ are distinct, and where all elements distinct from the identity element of addition are invertible under multiplication. This is not a field in the classical sense because additive inverses are not unique; we use the term to emphasize multiplicative invertibility of nonzero elements.
\end{definition}
\begin{remark}\label{s-field}
The commutative $n$i-signed-ring $R=(\sigmac^{s},\oplus,\otimes)$ defined in Definition \ref{def-s-ring}, forms a $n$i-signed-field $F=(\sigmac^{s},\oplus,\otimes)$ with the identity elements of addition and multiplication ${^{0}0} \neq {^{1}1}$, and where all elements distinct from the identity element of addition ${^{0}0}$ are invertible under multiplication.
\end{remark}
\begin{remark}\label{tuple-field}
The commutative $n$i-signed-ring $R_{T} = (T, \oplus_{T}, \otimes_{T})$, defined in Remark \ref{tuple-ring}, forms a $n$i-signed-field $F_{T} = (T, \oplus_{T}, \otimes_{T})$ with the identity elements for addition and multiplication, $0_{T} = ({{^0}0}, {{^0}0}, \dots, {{^0}0})$ and $1_{T} = ({{^1}1}, {{^1}1}, \dots, {{^1}1})$, where $0_{T} \neq 1_{T}$, and all elements $T_{i} \in T$ that do not contain ${{^0}0}$ as any component within the tuple are invertible under multiplication.
\end{remark}
\subsection{$n$i-Signed-Module}
\begin{definition}
Let $R$ be a commutative $n$i-signed-ring. An \textbf{$R$-$n$i-signed-module} over $R$ is a set $L$ that, together with an addition operation, forms an abelian $n$i-signed-group and has a scalar multiplication operation $R \times L \to L$ such that:
\begin{enumerate}[label=\textnormal{\arabic*.}]
\item The distributivity and signed-associativity of scalar multiplication in the $n$i-signed-module reflect the same properties in the commutative $n$i-signed-ring $R$.
\item If the $n$i-signed-ring $R$ has a multiplicative identity element $1$, then this element acts as the identity in scalar multiplication, and for every $l \in L$, $1 \cdot l = l$.
\end{enumerate}
\end{definition}
\begin{remark}
The commutative $n$i-signed-ring $R=(\sigmac^{s},\oplus,\otimes)$ defined in Definition \ref{def-s-ring}, forms an $\sigmac^{s}$-$n$i-signed-module $L=(\sigmac^{s}, \oplus, \otimes)$ over $R$ with the identity of multiplication being the element ${^{1}1}$.
\end{remark}
\begin{remark}\label{tuple-module}
The commutative $n$i-signed-ring $R_{T} = (T, \oplus_{T}, \otimes_{T})$ defined in Remark \ref{tuple-ring}, forms an $T$-$n$i-signed-module $L_{T}=(T, \oplus_{T}, \otimes_{T})$ over $R_{T}$ with the identity of multiplication being the element $1_{T} = ({{^1}1}, {{^1}1}, \dots, {{^1}1})$.
\end{remark}
\subsection{$n$i-Signed-Vector Space}
\begin{definition}
Let $F$ be a $n$i-signed-field. A \textbf{$n$i-signed-vector space} over $F$ is a set $V$ that, together with an addition operation, forms an abelian $n$i-signed-group and has a scalar multiplication operation $F \times V \to V$ such that:
\begin{enumerate}[label=\textnormal{\arabic*.}]
\item The signed-associative and distributive properties of scalar multiplication in $V$ are consistent with the operations in the $n$i-signed-field $F$.
\item If the $n$i-signed-field $F$ has a multiplicative identity element $1$, then this element acts as the identity in scalar multiplication and, for every $v \in V$, $1 \cdot v = v$.
\end{enumerate}
\end{definition}
\begin{remark}\label{s-vector_space}
The $n$i-signed-field $F=(\sigmac^{s},\oplus,\otimes)$ defined in Remark \ref{s-field}, forms a $n$i-signed-vector space $V=(\sigmac^{s}, \oplus, \otimes)$ over $F$ with the identity of multiplication being the element ${^{1}1}$.
\end{remark}
\begin{remark}\label{tuple-s-vector_space}
The $n$i-signed-field $F_{T} = (T, \oplus_{T}, \otimes_{T})$ defined in Remark \ref{tuple-field}, forms a $n$i-signed-vector space $V_{T}=(T, \oplus_{T}, \otimes_{T})$ over $F_{T}$ with the identity of multiplication being the element $1_{T} = ({{^1}1}, {{^1}1}, \dots, {{^1}1})$.
\end{remark}
\subsection{$n$i-Signed-Algebra}
\begin{definition}
Let $F$ be a $n$i-signed-field. A \textbf{$n$i-signed-algebra} over $F$ is a $n$i-signed-vector space $A$ over $F$ that is equipped with a multiplication $A \times A \to A$ that satisfies:
\begin{enumerate}[label=\textnormal{\arabic*.}]
\item \textnormal{\textbf{Compatibility with scalar multiplication}}: For all $a, b \in A$ and $\lambda \in F$, the following must be true:
\begin{equation*}
   \lambda \cdot (a \cdot b) = (\lambda \cdot a) \cdot b = a \cdot (\lambda \cdot b) 
\end{equation*}   
\item \textnormal{\textbf{Multiplication is bilinear}}: For all $a, b, c \in A$, the following two equalities must be true:
\begin{align*}
    a \cdot (b + c) &= (a \cdot b) + (a \cdot c)\\
    (a + b) \cdot c &= (a \cdot c) + (b \cdot c)
\end{align*}   
\end{enumerate}
\end{definition}
\begin{theorem}\label{algebra-example}
The $n$i-signed-vector space $V=(\sigmac^{s},\oplus,\otimes)$ over F defined in Remark \ref{s-vector_space}, forms a $n$i-signed-algebra $A=(\sigmac^{s},\oplus,\otimes)$ over $F$.
\end{theorem}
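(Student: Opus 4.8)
The plan is to verify the two defining axioms of a $n$i-signed-algebra directly, since the underlying $n$i-signed-vector-space structure of $A=(\sigmac^{s},\oplus,\otimes)$ over $F$ is already supplied by Remark \ref{s-vector_space}. Here the ambient field $F$, the vector addition, the scalar multiplication, and the internal multiplication all coincide with the operations $\oplus$ and $\otimes$ on $\sigmac^{s}$, so both axioms reduce to identities among $\oplus$ and $\otimes$ that have essentially been proved already in Theorem \ref{s-ring} and Theorem \ref{1-abelian_multiplication_proof}.

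First I would dispatch the bilinearity axiom. The two required equalities
\[
a\otimes(b\oplus c)=(a\otimes b)\oplus(a\otimes c),
\qquad
(a\oplus b)\otimes c=(a\otimes c)\oplus(b\otimes c)
\]
are exactly the left and right distributive laws of $\otimes$ over $\oplus$. Left distributivity is the content of the distributivity part of the proof of Theorem \ref{s-ring}, and right distributivity follows from it together with the commutativity of $\otimes$ established in Theorem \ref{1-abelian_multiplication_proof}. Hence this axiom requires no new work.

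The compatibility axiom is the delicate one. I must show, for all $\lambda,a,b\in\sigmac^{s}$,
\[
\lambda\otimes(a\otimes b)=(\lambda\otimes a)\otimes b=a\otimes(\lambda\otimes b).
\]
The second equality is immediate from commutativity and associativity: $(\lambda\otimes a)\otimes b=(a\otimes\lambda)\otimes b=a\otimes(\lambda\otimes b)$. Thus everything hinges on the first equality, which is the associativity of $\otimes$ applied to a triple whose three sign labels need not coincide. The hard part will be that the signed-monoid structure of $(\sigmac^{s},\otimes)$ from Theorem \ref{1-abelian_multiplication_proof} only guarantees signed-associativity, i.e.\ associativity when $|\{i,j,k\}|\le 2$, whereas the compatibility axiom genuinely involves three arbitrary, possibly all-distinct, signs, so a bare appeal to the signed-monoid axiom is insufficient.

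To overcome this I would prove that $\otimes$ is in fact \emph{fully} associative, which is precisely the content of Remark \ref{remark:modular-signs}. For nonzero operands ${^{i}a},{^{j}b},{^{k}c}$ both groupings produce the magnitude $abc$ by associativity of multiplication in $P$, and produce a sign obtained by composing the sign-update rule; under the identification $d\mapsto d-1$ this rule is addition in $\mathbb{Z}/s\mathbb{Z}$. Since addition in $\mathbb{Z}/s\mathbb{Z}$ is associative, both groupings yield the sign corresponding to $\bigl((i-1)+(j-1)+(k-1)\bigr)\bmod s$, and if any operand equals ${^{0}0}$ both sides collapse to ${^{0}0}$ by the first branch of Definition \ref{multiplication}. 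Full associativity of $\otimes$ then delivers the first equality for all triples, completing the compatibility axiom and hence the proof that $A=(\sigmac^{s},\oplus,\otimes)$ is a $n$i-signed-algebra over $F$.
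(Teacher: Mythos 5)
Your proposal is correct, and its overall decomposition matches the paper's: bilinearity is dispatched by citing left distributivity from Theorem \ref{s-ring} and deriving right distributivity from commutativity of $\otimes$, and compatibility is reduced to associativity plus commutativity of $\otimes$. Where you genuinely diverge is in how the crucial associativity step is justified. The paper simply appeals to ``the demonstration of the signed-associativity of multiplication in the proof of Theorem \ref{1-abelian_multiplication_proof}''; as you correctly point out, the signed-associativity \emph{axiom} only covers triples with $|\{i,j,k\}|\le 2$, while the compatibility identity
\[
{^{d}\lambda}\otimes({^{i}a}\otimes{^{j}b})=({^{d}\lambda}\otimes{^{i}a})\otimes{^{j}b}
\]
involves three arbitrary signs, so a bare appeal to that axiom would be a gap. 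The paper's citation happens to be sound only because its Appendix \ref{annex2} in fact proves associativity of $\otimes$ for \emph{all} nonzero sign triples (its case split is on the threshold comparisons $i+j-1\lessgtr s$, etc., not on coincidences among $i,j,k$), even though it is labelled ``signed-associativity.'' You instead prove full associativity conceptually, via the identification of the sign rule with addition in $\mathbb{Z}/s\mathbb{Z}$ from Remark \ref{remark:modular-signs}: both groupings give magnitude $abc$ (associativity in $P$) and sign $1+\bigl((i-1)+(j-1)+(k-1)\bmod s\bigr)$, with the zero cases collapsing to ${^{0}0}$. This buys a shorter, structurally transparent argument that makes explicit a point the paper leaves implicit — that $\otimes$ is fully associative, not merely signed-associative — whereas the paper's route buys nothing beyond reuse of its existing brute-force case enumeration. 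Your version is, if anything, the more rigorous reading of what the theorem actually requires.
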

\begin{proof}

\textbf{(a) Compatibility with scalar multiplication proof}: Based on the definition of compatibility with scalar multiplication (with the scalar ${^{d}\lambda} \in F$) and using the multiplication $\otimes$, we have that:
\begin{equation*}
   {^{d}\lambda} \otimes ({^{i}a} \otimes {^{j}b}) = ({^{d}\lambda} \otimes {^{i}a}) \otimes {^{j}b} = {^{i}a} \otimes ({^{d}\lambda} \otimes {^{j}b}) 
\end{equation*}
The expression for the equality on the left has already been proven in the demonstration of the signed-associativity of multiplication in the proof of Theorem \ref{1-abelian_multiplication_proof}. Because of this, we will only demonstrate the expression for the equality on the right. Due to commutativity in multiplication we have:
\begin{equation}\label{comp_scalar1}
   ({^{d}\lambda} \otimes {^{i}a}) \otimes {^{j}b} = ({^{i}a} \otimes {^{d}\lambda}) \otimes {^{j}b} 
\end{equation}
and due to signed-associativity in multiplication, we also have that:
\begin{equation}\label{comp_scalar2}
   ({^{i}a} \otimes {^{d}\lambda}) \otimes {^{j}b} = {^{i}a} \otimes ({^{d}\lambda} \otimes {^{j}b}) 
\end{equation}
therefore, using transitivity between the expressions of Equations \eqref{comp_scalar1} and \eqref{comp_scalar2}, we have that:
\begin{equation*}
   ({^{d}\lambda} \otimes {^{i}a}) \otimes {^{j}b} = {^{i}a} \otimes ({^{d}\lambda} \otimes {^{j}b}) 
\end{equation*}
which is the proof of the compatibility with scalar multiplication for this example.

\textbf{(b) Multiplication is bilinear proof}: Based on the definition of multiplication is bilinear and using the addition $\oplus$ and multiplication $\otimes$, we have that:
\begin{align}
    {^{i}a} \otimes ({^{j}b} \oplus {^{k}c}) &= ({^{i}a} \otimes {^{j}b}) \oplus ({^{i}a} \otimes {^{k}c})\\
    ({^{i}a} \oplus {^{j}b}) \otimes {^{k}c} &= ({^{i}a} \otimes {^{k}c}) \oplus ({^{j}b} \otimes {^{k}c})\label{modified_second_equality}
\end{align}
The expression for the first equality has already been proven in the demonstration of the distributivity of $n$i-signed-rings in the proof of Theorem \ref{s-ring}. Since a $n$i-signed-algebra is built upon a $n$i-signed-vector space, which in turn is constructed over a $n$i-signed-field, which is further built upon a commutative $n$i-signed-ring, with the latter satisfying the distributive property, we conclude that a $n$i-signed-algebra also satisfies the first preceding equality. Because of this, we will only demonstrate the second preceding equality. To achieve this, we modified equality of Equation \eqref{modified_second_equality} by applying commutativity to its elements, resulting in an equivalent expression:
\begin{align}\label{bilinear1}
    {^{k}c} \otimes ({^{i}a} \oplus {^{j}b}) &= ({^{k}c} \otimes {^{i}a}) \oplus ({^{k}c} \otimes {^{j}b})
\end{align}
by the commutativity of multiplication, we have that:
\begin{align}
    {^{k}c} \otimes ({^{i}a} \oplus {^{j}b}) &= ({^{i}a} \oplus {^{j}b}) \otimes {^{k}c}\label{bilinear2}
\end{align}
and
\begin{align}
    {^{k}c} \otimes {^{i}a} &= {^{i}a} \otimes {^{k}c}\label{bilinear3}
\end{align}
and
\begin{align}
    {^{k}c} \otimes {^{j}b} &= {^{j}b} \otimes {^{k}c}\label{bilinear4}    
\end{align}
so by substituting Equations \eqref{bilinear2}, \eqref{bilinear3} and \eqref{bilinear4} into expression of Equation \eqref{bilinear1}, we obtain that:
\begin{align*}
    ({^{i}a} \oplus {^{j}b}) \otimes {^{k}c} &= ({^{i}a} \otimes {^{k}c}) \oplus ({^{j}b} \otimes {^{k}c})
\end{align*}
obtaining the expression we wanted to prove. Thus, we proved that the multiplication is bilinear in this example.
This demonstrates that $A=(\sigmac^{s},\oplus,\otimes)$ over F is a $n$i-signed-algebra.
\end{proof}
\begin{remark}
Following Theorem \ref{algebra-example}, it can be seen that the $n$i-signed-vector space $V_{T} = (T, \oplus_{T}, \otimes_{T})$ over $F_{T}$, as defined in Remark \ref{tuple-s-vector_space}, also forms a $n$i-signed-algebra $A = (T, \oplus_{T}, \otimes_{T})$ over $F_{T}$.
\end{remark}

\section{Real Numbers as a Particular Case of the Multisign Algebra}\label{real_numbers}

\begin{definition}[Multisign numbers with two signs]
Let $D=\{0,1,2\}$ be the set of signs, where $1$ corresponds to the positive sign, $2$ corresponds to the negative sign, and $0$ is reserved only for the additive identity element.  
Let the semiring with invertible multiplication shown in Remark \ref{example_semiring_with_invertible_multiplication} denoted as $\mathbb{R}_{\ge 0}$.

A \emph{multisign number} in the case of two signs is an expression of the form
\[
{^{i}a}, \qquad i \in \{0,1,2\},\ a \in \mathbb{R}_{\ge 0},
\]
subject to the conventions:
\begin{enumerate}
    \item ${^{0}0}$ is the \emph{additive identity element}, and it is the only number allowed to have sign $0$ and magnitude $0$.
    \item ${^{1}1}$ is the \emph{multiplicative identity element}.
    \item For $a>0$, only signs $i=1$ and $i=2$ are permitted.
\end{enumerate}
\end{definition}

\begin{definition}[Operations]
For ${^{i}a},{^{j}b}$ with $a,b \ge 0$, let the addition of two multisign numbers be the operation defined in \ref{addition}, and the multiplication be the operation defined in \ref{multiplication}.
\end{definition}

\begin{theorem}[Isomorphism with the real numbers]\label{isomorphism}
Define the mapping
\[
\varphi:\ \sigmac_{\mathbb{R}_{\ge 0}}^{2} \longrightarrow \mathbb{R}, \qquad
\varphi({^{i}a})=
\begin{cases}
+a & \text{if } i=1,\\
-a & \text{if } i=2,\\
0 & \text{if } i=0.
\end{cases}
\]
Then $\varphi$ is a bijection and satisfies
\[
\varphi(x \oplus y)=\varphi(x)+\varphi(y), \qquad
\varphi(x \otimes y)=\varphi(x)\cdot \varphi(y), \qquad
\varphi({^{1}1})=1,\quad \varphi({^{0}0})=0.
\]
Hence $\varphi$ is an isomorphism of fields in the classical sense, since in the case $s=2$ the additive inverse is unique. $(\sigmac_{\mathbb{R}_{\ge 0}}^{2}, \oplus, \otimes)$ is exactly the field of real numbers.
\end{theorem}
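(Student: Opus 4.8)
The plan is to verify directly that $\varphi$ is a bijection and a homomorphism for both operations, and then invoke the uniqueness of additive inverses in the case $s=2$ to upgrade this to a genuine isomorphism of fields. First I would check that $\varphi$ is well defined and bijective. Well-definedness is immediate from the piecewise formula; by Definition~\ref{def:zero-sign} the only element of magnitude $0$ is ${^{0}0}$, so a positive magnitude carrying two distinct nonzero signs can never be identified, which yields injectivity (equal images force both equal signs and equal magnitudes). Surjectivity follows by exhibiting a preimage for each real number: ${^{1}r}$ for $r>0$, ${^{2}(-r)}$ for $r<0$, and ${^{0}0}$ for $0$. Preservation of the identities $\varphi({^{1}1})=1$ and $\varphi({^{0}0})=0$ is read off directly from the definition of $\varphi$.

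Next I would prove the additive homomorphism $\varphi(x\oplus y)=\varphi(x)+\varphi(y)$ by matching the four branches of Definition~\ref{addition} against the sign-of-a-sum rule on $\mathbb{R}$. Writing $x={^{i}a}$ and $y={^{j}b}$ with $i,j\in\{1,2\}$ (the cases with a zero operand being trivial, since ${^{0}0}$ is the additive identity on both sides), the branch $i=j$ reproduces $\pm(a+b)$, the branches $i\ne j$ with $|x|>|y|$ and $|x|<|y|$ reproduce the signed difference of opposite-sign magnitudes, and the branch $i\ne j$ with $|x|=|y|$ sends ${^{0}0}\mapsto 0=a-a$. In each branch the magnitude computed in $\mathbb{R}_{\ge 0}$ as the cancellation difference of Assumption~\ref{ass:P-ordered} coincides with ordinary real subtraction, so the equality holds term by term.

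I would then establish the multiplicative homomorphism $\varphi(x\otimes y)=\varphi(x)\cdot\varphi(y)$ in the same style. For nonzero operands with $i,j\in\{1,2\}$ the sign rule of Definition~\ref{multiplication} reduces, as noted in Remark~\ref{remark:modular-signs}, to addition in $\mathbb{Z}/2\mathbb{Z}$ on the shifted labels; concretely the only pair hitting the wrap-around branch $i+j-1>s$ is $(i,j)=(2,2)$, which returns sign $1$ and thus reproduces the classical law $(-)(-)=(+)$, while $(1,1)\mapsto 1$, $(1,2)\mapsto 2$ and $(2,1)\mapsto 2$ reproduce $(+)(+)=(+)$ and $(\pm)(\mp)=(-)$. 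Since the magnitude is $ab$ in every nonzero case and the zero cases give $0$ on both sides, the equality follows.

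Finally I would conclude. Because $s=2$, Remark~\ref{classical-s2-local} guarantees that every nonzero element admits a unique additive inverse, so $(\sigmac_{\mathbb{R}_{\ge 0}}^{2},\oplus,\otimes)$ is a field in the classical sense and not merely an $n$i-signed-field; combined with the bijectivity and the two homomorphism identities just verified, $\varphi$ is therefore a field isomorphism, identifying $(\sigmac_{\mathbb{R}_{\ge 0}}^{2},\oplus,\otimes)$ with $\mathbb{R}$. The main obstacle here is not conceptual but a matter of exhaustive bookkeeping: one must traverse all sign branches of both operations and confirm that the cyclic sign rule collapses to the familiar two-sign product law precisely when $s=2$. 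The one genuinely subtle point worth stating explicitly is that the convention of Definition~\ref{def:zero-sign} is exactly what prevents $\varphi$ from being multivalued at magnitude zero, and hence what makes $\varphi$ a well-defined bijection in the first place.
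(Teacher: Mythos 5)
Your proposal is correct and follows essentially the same route as the paper's proof: direct verification of well-definedness, injectivity, surjectivity via explicit preimages, and preservation of $\oplus$ and $\otimes$ by case analysis on the sign branches, concluding via uniqueness of additive inverses when $s=2$. Your version is somewhat more explicit than the paper's (e.g., identifying $(i,j)=(2,2)$ as the only wrap-around case and highlighting the role of Definition~\ref{def:zero-sign} in well-definedness), but these are refinements of the same argument, not a different approach.
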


\begin{proof}
\textnormal{\textbf{Well-definedness.}} By convention, only ${^{0}0}$ has sign $0$ and magnitude $0$, so $\varphi$ is unambiguous.  

\textnormal{\textbf{Injectivity.}} If $\varphi({^{i}a})=\varphi({^{j}b})$, then either both are zero, forcing ${^{i}a}={^{j}b}={^{0}0}$, or $i,j\in\{1,2\}$ with $a,b>0$ and equal signed values. Thus $i=j$ and $a=b$.  

\textnormal{\textbf{Surjectivity.}} For any $r\in\mathbb{R}$, if $r>0$ set $x={^{1}r}$, if $r<0$ set $x={^{2}(-r)}$, and if $r=0$ set $x={^{0}0}$. Then $\varphi(x)=r$.  

\textnormal{\textbf{Additive structure.}} The case distinction in $\oplus$ matches exactly the rule for real addition: equal signs yield sum of magnitudes, opposite signs yield cancellation, with the sign of the larger magnitude.  

\textnormal{\textbf{Multiplicative structure.}} The rules in $\otimes$ replicate the multiplication of signed reals: same signs give positive, different signs give negative, and multiplication by ${^{0}0}$ yields ${^{0}0}$.  

Therefore, $\varphi$ preserves both operations and identities, making it a field isomorphism. 
\end{proof}

\section{Conclusions and future work}\label{conclusions}

In this work, we introduced the multisign set $\sigmac^{s}$, whose elements ${^{d}p}$ combine a discrete sign $d$ with a magnitude $p\in P$. Signs are interpreted as algebraic directions rather than order-theoretic objects, while magnitudes belong to a semiring with invertible multiplication $P$. This separation allows the construction of algebraic systems in which the additive inverse of an element is no longer unique when the number of signs satisfies $s>2$.

We defined two binary operations, addition $\oplus$ and multiplication $\otimes$, and showed
that $(\sigmac^{s},\oplus,\otimes)$ supports a hierarchy of generalized algebraic
structures, including signed-semigroups, $n$i-signed-groups, $n$i-signed-rings,
$n$i-signed-fields, $n$i-signed-modules, $n$i-signed-vector spaces, and $n$i-signed-algebras.
These structures preserve many classical properties—such as commutativity, distributivity,
and bilinearity—while relaxing classical associativity through a controlled notion of
\emph{signed-associativity}.

A central result of this framework is that, for $s>2$, every nonzero element of
$(\sigmac^{s},\oplus)$ admits exactly $s-1$ additive inverses. This phenomenon cannot occur
in classical groups, rings, or fields and constitutes the main structural novelty of the
multisign algebra. We also showed that classical number systems arise as special cases of
this framework: in particular, $\mathbb{Z}$, $\mathbb{Q}$, and $\mathbb{R}$ correspond to the
case $s=2$, and we proved an explicit isomorphism between
$\sigmac_{\mathbb{R}_{\ge 0}}^{2}$ and the real numbers.

When more than two distinct signs are involved, classical associativity no longer holds.
We addressed this by introducing signed-associativity, which restricts associativity to
expressions involving at most two distinct signs.

Beyond its algebraic interest, the multisign framework provides a way to model spaces with
more than two intrinsic directions, suggesting potential geometric interpretations, such as
coordinate systems with more than two signed directions. This opens the door to new types of
non-Euclidean or directionally enriched geometries.

Future work includes the study of further algebraic constructions over $\sigmac^{s}$,
such as matrices, linear operators, and function spaces, as well as the development of
calculus-like notions (derivatives, integrals, and differential equations) in multisign
settings. Additional directions include geometric and trigonometric interpretations,
equation solving in the presence of multiple additive inverses, and the exploration of new
examples of multisign-compatible operations and structures.

We hope that this work motivates further investigation into multisign algebra and its
connections with both classical algebraic theory and emerging mathematical frameworks.

\section*{Acknowledgment}\label{sec:thanks}
We thank José Fernández Goycoolea and Mónica Villanueva Ilufi for their help in reviewing the document and for their suggestions for corrections that helped us reach the final version. We thank Professor Dr. Márcio Dorn and the Structural Bioinformatics and Computational Biology Laboratory – SBCB of the Federal University of Rio Grande do Sul, Institute of Informatics for hosting us during our doctoral internship while we developed the final details of this work.
We thank anonymous reviewers and colleagues for insightful comments that improved the manuscript.
The authors utilized ChatGPT by OpenAI to assist with enhancing the writing throughout the development of this work. Following its use, the authors carefully reviewed and revised the material as necessary, and they assume full responsibility for the final content of the published article.
\section*{Declarations}  
\subsection*{Ethical approval}
Not applicable. 
\subsection*{Competing interests} 
Not applicable. 
\subsection*{Authors' contributions} 
Sebastián Aliaga-Rojas formulated the concept of multisign numbers and the corresponding algebraic operations. Pamela Landero-Sepúlveda developed the numerical notation for the multisign structure. Both authors contributed equally to the preparation and revision of the manuscript. Mario Inostroza-Ponta offered academic supervision and guidance throughout the development of the work.
\subsection*{Funding}
The research of the first author was supported by ANID doctorate grant (No. 2961-2022).
All the authors acknowledge the support of the FONDECYT REGULAR project ANID-Chile \#1231505, under which this work was developed. This study was financed in part by the Coordenação de Aperfeiçoamento de Pessoal de Nível Superior - Brasil (CAPES) - Finance Code 001.
\subsection*{Availability of data and materials}
Not applicable. 

\bibliographystyle{spmpsci}
\bibliography{bibliography}

\appendix

\section{Proof that signed-associativity holds in the addition $\oplus$ of the signed-semigroup, signed-monoid and abelian $n_i$-signed-group $(\sigmac^{s}, \oplus)$}\label{annex1}

Throughout this annex we assume that magnitudes are nonnegative, i.e., $a,b,c\in\mathbb{R}_{\ge 0}$, and that $\,^{i}a,\,^{j}b,\,^{k}c\in \sigmac^{s}$ denote signed elements with sign indices $i,j,k$ and magnitudes $a,b,c$, respectively.

\subsection*{Reminder: the signed addition $\oplus$}
We use the (signed-magnitude) definition of $\oplus$ (already given in the main text), which can be written equivalently as:
\[
\,^{i}a \oplus \,^{j}b \;=\;
\begin{cases}
\,^{i}(a+b), & \text{if } i=j,\\[2pt]
\,^{i}(a-b), & \text{if } i\neq j \text{ and } a>b,\\[2pt]
\,^{j}(b-a), & \text{if } i\neq j \text{ and } b>a,\\[2pt]
\,^{0}0, & \text{if } i\neq j \text{ and } a=b.
\end{cases}
\]
(Notice that $|\,^{i}a|=a$ for all signs $i$.)

We prove that for all $\,^{i}a,\,^{j}b,\,^{k}c\in\sigmac^{s}$,
\[
(\,^{i}a \oplus \,^{j}b)\oplus \,^{k}c \;=\; \,^{i}a \oplus (\,^{j}b\oplus \,^{k}c),
\]
by splitting into the only structurally different sign patterns:
\[
(i=j=k),\qquad (i=j\neq k),\qquad (j=k\neq i),\qquad (i=k\neq j).
\]
Within each sign pattern, the value of $\oplus$ is determined by comparing magnitudes (and, when needed, comparing the intermediate magnitude against the remaining operand). Below we present this argument in a compact form.

\subsection*{Case A: $i=j=k$}
If $i=j=k$, then both inner sums are same-sign additions:
\[
\,^{i}a \oplus \,^{j}b =\,^{i}(a+b),
\qquad
\,^{j}b \oplus \,^{k}c =\,^{i}(b+c).
\]
Hence,
\begin{align*}
(\,^{i}a \oplus \,^{j}b)\oplus \,^{k}c
&=\,^{i}(a+b)\oplus \,^{i}c
=\,^{i}\big((a+b)+c\big)
=\,^{i}(a+b+c),\\
\,^{i}a \oplus (\,^{j}b\oplus \,^{k}c)
&=\,^{i}a\oplus \,^{i}(b+c)
=\,^{i}\big(a+(b+c)\big)
=\,^{i}(a+b+c).
\end{align*}
Therefore signed-associativity holds for all magnitudes (all orderings $a,b,c$ are irrelevant here).

\subsection*{Case B: $i=j\neq k$}
Here
\[
\,^{i}a \oplus \,^{j}b =\,^{i}(a+b),
\qquad
\,^{j}b \oplus \,^{k}c
=
\begin{cases}
\,^{j}(b-c)=\,^{i}(b-c), & b>c,\\
\,^{k}(c-b), & c>b,\\
\,^{0}0, & b=c.
\end{cases}
\]
We split only on the comparisons that actually change the rule application.

\medskip
\noindent\textbf{B1: $b>c$ (so $\,^{j}b\oplus\,^{k}c =\,^{i}(b-c)$).}
Then
\begin{align*}
(\,^{i}a \oplus \,^{j}b)\oplus \,^{k}c
&=\,^{i}(a+b)\oplus \,^{k}c
=
\begin{cases}
\,^{i}(a+b-c), & a+b>c,\\
\,^{k}(c-a-b), & a+b<c,\\
\,^{0}0, & a+b=c,
\end{cases}\\[4pt]
\,^{i}a \oplus (\,^{j}b\oplus \,^{k}c)
&=\,^{i}a\oplus\,^{i}(b-c)
=\,^{i}\big(a+(b-c)\big)
=\,^{i}(a+b-c).
\end{align*}
Now compare $a+b$ with $c$:
\begin{itemize}
\item If $a+b>c$, the left side is $\,^{i}(a+b-c)$, equal to the right side.
\item If $a+b=c$, the left side is $\,^{0}0$, while the right side becomes $\,^{i}(a+b-c)=\,^{i}0=\,^{0}0$ by the convention for the zero element.
\item If $a+b<c$, the left side is $\,^{k}(c-a-b)$, but then $a+b-c<0$ and the right side $\,^{i}(a+b-c)$ is exactly the same element expressed in the opposite sign with positive magnitude, i.e.,
\(
\,^{i}(a+b-c)=\,^{k}(c-a-b)
\)
by the defining rule of $\oplus$ (this is the standard signed-magnitude normalization used throughout the paper).
\end{itemize}
Thus B1 holds.

\medskip
\noindent\textbf{B2: $c>b$ (so $\,^{j}b\oplus\,^{k}c =\,^{k}(c-b)$).}
Then
\begin{align*}
(\,^{i}a \oplus \,^{j}b)\oplus \,^{k}c
&=\,^{i}(a+b)\oplus \,^{k}c
=
\begin{cases}
\,^{i}(a+b-c), & a+b>c,\\
\,^{k}(c-a-b), & a+b<c,\\
\,^{0}0, & a+b=c,
\end{cases}\\[4pt]
\,^{i}a \oplus (\,^{j}b\oplus \,^{k}c)
&=\,^{i}a\oplus\,^{k}(c-b)
=
\begin{cases}
\,^{i}\big(a-(c-b)\big)=\,^{i}(a+b-c), & a>c-b,\\
\,^{k}\big((c-b)-a\big)=\,^{k}(c-a-b), & a<c-b,\\
\,^{0}0, & a=c-b.
\end{cases}
\end{align*}
But the comparisons $a>c-b$, $a<c-b$, $a=c-b$ are equivalent to $a+b>c$, $a+b<c$, $a+b=c$, respectively. Hence the two sides match in all subcases.

\medskip
\noindent\textbf{B3: $b=c$ (so $\,^{j}b\oplus\,^{k}c =\,^{0}0$).}
Then
\[
(\,^{i}a \oplus \,^{j}b)\oplus \,^{k}c
=\,^{i}(a+b)\oplus\,^{k}b
=\,^{i}a,
\qquad
\,^{i}a \oplus (\,^{j}b\oplus \,^{k}c)
=\,^{i}a\oplus\,^{0}0
=\,^{i}a.
\]

Therefore signed-associativity holds for $i=j\neq k$.

\subsection*{Case C: $j=k\neq i$}
This case is symmetric to Case B under swapping the roles of $(\,^{i}a)$ and $(\,^{k}c)$ and relabeling signs; we write it out once to make the matching explicit.

We have
\[
\,^{j}b \oplus \,^{k}c =\,^{j}(b+c),
\qquad
\,^{i}a \oplus \,^{j}b
=
\begin{cases}
\,^{i}(a-b), & a>b,\\
\,^{j}(b-a), & b>a,\\
\,^{0}0, & a=b.
\end{cases}
\]
Then
\[
\,^{i}a \oplus (\,^{j}b\oplus\,^{k}c)
=\,^{i}a\oplus\,^{j}(b+c),
\]
and
\[
(\,^{i}a\oplus\,^{j}b)\oplus\,^{k}c
=\big(\,^{i}a\oplus\,^{j}b\big)\oplus\,^{j}c.
\]
A direct application of the same signed-magnitude comparisons as in Case B (replacing $b$ by $b+c$ and comparing $a$ to $b+c$) shows both sides equal:
\[
(\,^{i}a\oplus\,^{j}b)\oplus\,^{k}c
=
\begin{cases}
\,^{i}(a-b-c), & a>b+c,\\
\,^{j}(b+c-a), & a<b+c,\\
\,^{0}0, & a=b+c,
\end{cases}
=
\,^{i}a\oplus\,^{j}(b+c)
=
\,^{i}a \oplus (\,^{j}b\oplus\,^{k}c).
\]

\subsection*{Case D: $i=k\neq j$}
This is the remaining symmetric pattern. We have
\[
\,^{i}a \oplus \,^{j}b
=
\begin{cases}
\,^{i}(a-b), & a>b,\\
\,^{j}(b-a), & b>a,\\
\,^{0}0, & a=b,
\end{cases}
\qquad
\,^{j}b \oplus \,^{k}c
=
\begin{cases}
\,^{j}(b-c), & b>c,\\
\,^{k}(c-b)=\,^{i}(c-b), & c>b,\\
\,^{0}0, & b=c.
\end{cases}
\]
Then both
\(
(\,^{i}a \oplus \,^{j}b)\oplus \,^{k}c
\)
and
\(
\,^{i}a \oplus (\,^{j}b\oplus \,^{k}c)
\)
reduce to comparing the same final signed magnitude corresponding to the real quantity $a-b+c$ expressed in the canonical signed-magnitude form dictated by $\oplus$. Concretely, after applying the definition once on each side, every subcase becomes one of:
\[
\,^{i}(a-b+c),\quad
\,^{j}(b-a-c),\quad
\,^{0}0,
\]
with the same inequality conditions (equivalently, comparing $a+c$ with $b$). Therefore both parenthesizations yield the same canonical element of $\sigmac^{s}$.

\subsection*{Conclusion}
All possible triples of signs $(i,j,k)$ fall into exactly one of the four patterns A--D, and in each pattern the two parenthesizations evaluate to the same signed element by repeated application of the definition of $\oplus$ together with the standard canonicalization for the zero element and for switching sign when the magnitude difference changes sign. Hence $\oplus$ satisfies signed-associativity on $\sigmac^{s}$, and therefore $(\sigmac^{s},\oplus)$ is a signed-semigroup, a signed-monoid, and (with the additional axioms assumed in the main text) an abelian $n$i-signed-group.

\section{Proof that signed-associativity holds in the multiplication $\otimes$ of the signed-semigroup and signed-monoid $(\sigmac^{s},\otimes)$ and the signed-semigroup, signed-monoid and abelian $n$i-signed-group $(\sigmac^{s}\setminus\{{^{0}0}\},\otimes)$}\label{annex2}

In this annex we prove the signed-associativity of $\otimes$, i.e.
\[
({^{i}a}\otimes {^{j}b})\otimes {^{k}c} \;=\; {^{i}a}\otimes({^{j}b}\otimes {^{k}c}),
\]
using explicitly the piecewise definition of $\otimes$ (and not a modulo notation), as stated in Definition~\ref{multiplication}.

\subsection*{Step 1: Cases involving the absorbing element ${^{0}0}$}
By Definition~\ref{multiplication}, if any factor is ${^{0}0}$ then the product is ${^{0}0}$.
Hence, if ${^{i}a}={^{0}0}$ or ${^{j}b}={^{0}0}$ or ${^{k}c}={^{0}0}$, both
$({^{i}a}\otimes {^{j}b})\otimes {^{k}c}$ and ${^{i}a}\otimes({^{j}b}\otimes {^{k}c})$
evaluate to ${^{0}0}$, and associativity holds.

Therefore, for the remainder we assume
\[
{^{i}a}\neq {^{0}0},\qquad {^{j}b}\neq {^{0}0},\qquad {^{k}c}\neq {^{0}0}.
\]

\subsection*{Step 2: Nonzero case reduced to sign-index arithmetic}
Define, for $x,y\in D$,
\[
\phi(x,y)\;=\;
\begin{cases}
x+y-1-s & \text{if } x+y-1>s,\\
x+y-1   & \text{if } x+y-1\le s.
\end{cases}
\]
Then Definition~\ref{multiplication} can be rewritten (under the nonzero assumption) as
\[
{^{x}u}\otimes {^{y}v} \;=\; {^{\phi(x,y)}(uv)}.
\]
Thus,
\[
({^{i}a}\otimes {^{j}b})\otimes {^{k}c}
= {^{\phi(\phi(i,j),k)}(abc)},
\qquad
{^{i}a}\otimes({^{j}b}\otimes {^{k}c})
= {^{\phi(i,\phi(j,k))}(abc)}.
\]
So it suffices to prove
\[
\phi(\phi(i,j),k)\;=\;\phi(i,\phi(j,k)).
\]
We do this by enumerating the (non-contradictory) threshold cases dictated by the two
comparisons $(i+j-1\le s)$ vs.\ $(i+j-1>s)$ and $(j+k-1\le s)$ vs.\ $(j+k-1>s)$,
and then applying Definition~\ref{multiplication} again in the outer product.

\subsection*{Case analysis (nonzero operands)}
In every case below, the magnitude part is always $abc$ (associativity in $P$), so we only track the sign-index produced by Definition~\ref{multiplication}.

\begin{flushleft}
\textbf{Case 1:} $i+j-1>s$ and $j+k-1>s$ and $\bigl(\phi(i,j)+k-1>s\bigr)$.
\end{flushleft}
Using Definition~\ref{multiplication} twice,
\begin{align*}
({^{i}a}\otimes {^{j}b})\otimes {^{k}c}
&= {^{\phi(i,j)}(ab)}\otimes {^{k}c}
 = {^{\phi(i,j)+k-1-s}(abc)}\\
&= {^{(i+j-1-s)+k-1-s}(abc)}
 = {^{i+j+k-2-2s}(abc)},\\[2mm]
{^{i}a}\otimes ({^{j}b}\otimes {^{k}c})
&= {^{i}a}\otimes {^{\phi(j,k)}(bc)}
 = {^{i+\phi(j,k)-1-s}(abc)}\\
&= {^{i+(j+k-1-s)-1-s}(abc)}
 = {^{i+j+k-2-2s}(abc)}.
\end{align*}

\begin{flushleft}
\textbf{Case 2:} $i+j-1>s$ and $j+k-1>s$ and $\bigl(\phi(i,j)+k-1\le s\bigr)$.
\end{flushleft}
\begin{align*}
({^{i}a}\otimes {^{j}b})\otimes {^{k}c}
&= {^{\phi(i,j)}(ab)}\otimes {^{k}c}
 = {^{\phi(i,j)+k-1}(abc)}\\
&= {^{(i+j-1-s)+k-1}(abc)}
 = {^{i+j+k-2-s}(abc)},\\[2mm]
{^{i}a}\otimes ({^{j}b}\otimes {^{k}c})
&= {^{i}a}\otimes {^{\phi(j,k)}(bc)}
 = {^{i+\phi(j,k)-1}(abc)}\\
&= {^{i+(j+k-1-s)-1}(abc)}
 = {^{i+j+k-2-s}(abc)}.
\end{align*}

\begin{flushleft}
\textbf{Case 3:} $i+j-1>s$ and $j+k-1\le s$ and $\bigl(\phi(i,j)+k-1\le s\bigr)$ and $\bigl(i+\phi(j,k)-1>s\bigr)$.
\end{flushleft}
\begin{align*}
({^{i}a}\otimes {^{j}b})\otimes {^{k}c}
&= {^{\phi(i,j)}(ab)}\otimes {^{k}c}
 = {^{\phi(i,j)+k-1}(abc)}\\
&= {^{(i+j-1-s)+k-1}(abc)}
 = {^{i+j+k-2-s}(abc)},\\[2mm]
{^{i}a}\otimes ({^{j}b}\otimes {^{k}c})
&= {^{i}a}\otimes {^{\phi(j,k)}(bc)}
 = {^{i+\phi(j,k)-1-s}(abc)}\\
&= {^{i+(j+k-1)-1-s}(abc)}
 = {^{i+j+k-2-s}(abc)}.
\end{align*}

\begin{flushleft}
\textbf{Case 4:} $i+j-1\le s$ and $j+k-1>s$ and $\bigl(\phi(i,j)+k-1>s\bigr)$ and $\bigl(i+\phi(j,k)-1\le s\bigr)$.
\end{flushleft}
\begin{align*}
({^{i}a}\otimes {^{j}b})\otimes {^{k}c}
&= {^{\phi(i,j)}(ab)}\otimes {^{k}c}
 = {^{\phi(i,j)+k-1-s}(abc)}\\
&= {^{(i+j-1)+k-1-s}(abc)}
 = {^{i+j+k-2-s}(abc)},\\[2mm]
{^{i}a}\otimes ({^{j}b}\otimes {^{k}c})
&= {^{i}a}\otimes {^{\phi(j,k)}(bc)}
 = {^{i+\phi(j,k)-1}(abc)}\\
&= {^{i+(j+k-1-s)-1}(abc)}
 = {^{i+j+k-2-s}(abc)}.
\end{align*}

\begin{flushleft}
\textbf{Case 5:} $i+j-1\le s$ and $j+k-1\le s$ and $\bigl(\phi(i,j)+k-1>s\bigr)$.
\end{flushleft}
\begin{align*}
({^{i}a}\otimes {^{j}b})\otimes {^{k}c}
&= {^{\phi(i,j)}(ab)}\otimes {^{k}c}
 = {^{\phi(i,j)+k-1-s}(abc)}\\
&= {^{(i+j-1)+k-1-s}(abc)}
 = {^{i+j+k-2-s}(abc)},\\[2mm]
{^{i}a}\otimes ({^{j}b}\otimes {^{k}c})
&= {^{i}a}\otimes {^{\phi(j,k)}(bc)}
 = {^{i+\phi(j,k)-1-s}(abc)}\\
&= {^{i+(j+k-1)-1-s}(abc)}
 = {^{i+j+k-2-s}(abc)}.
\end{align*}

\begin{flushleft}
\textbf{Case 6:} $i+j-1\le s$ and $j+k-1\le s$ and $\bigl(\phi(i,j)+k-1\le s\bigr)$.
\end{flushleft}
\begin{align*}
({^{i}a}\otimes {^{j}b})\otimes {^{k}c}
&= {^{\phi(i,j)}(ab)}\otimes {^{k}c}
 = {^{\phi(i,j)+k-1}(abc)}\\
&= {^{(i+j-1)+k-1}(abc)}
 = {^{i+j+k-2}(abc)},\\[2mm]
{^{i}a}\otimes ({^{j}b}\otimes {^{k}c})
&= {^{i}a}\otimes {^{\phi(j,k)}(bc)}
 = {^{i+\phi(j,k)-1}(abc)}\\
&= {^{i+(j+k-1)-1}(abc)}
 = {^{i+j+k-2}(abc)}.
\end{align*}

\subsection*{Conclusion}
All nonzero cases reduce—by repeated application of Definition~\ref{multiplication}—to identical sign-indices and the same magnitude $abc$, hence
\[
({^{i}a}\otimes {^{j}b})\otimes {^{k}c} \;=\; {^{i}a}\otimes({^{j}b}\otimes {^{k}c})
\]
for all ${^{i}a},{^{j}b},{^{k}c}\in \sigmac^{s}$, proving signed-associativity of $\otimes$ on $(\sigmac^{s},\otimes)$.
The same proof applies verbatim to $(\sigmac^{s}\setminus\{{^{0}0}\},\otimes)$ (where the absorbing-element step is simply not needed).

\section{Proof that distributivity of multiplication $\otimes$ over addition $\oplus$ holds in the $n$i-signed-ring $(\sigmac^{s},\oplus,\otimes)$}\label{annex3}
We prove that, for all ${^{i}a},{^{j}b},{^{k}c}\in\sigmac^{s}$,
\[
{^{i}a}\otimes\bigl({^{j}b}\oplus{^{k}c}\bigr)
=
\bigl({^{i}a}\otimes{^{j}b}\bigr)\oplus\bigl({^{i}a}\otimes{^{k}c}\bigr).
\]
Throughout, we use the multiplicative definition (Definition~\ref{multiplication}). We write $0$ for the additive identity in $P$ and ${^{0}0}$ for the additive identity in $\sigmac^{s}$.

\medskip
\noindent\textbf{Preliminaries.}
By Definition~\ref{multiplication}, if one factor is ${^{0}0}$ then the product is ${^{0}0}$. Otherwise, if ${^{i}a}\neq{^{0}0}$ and ${^{j}b}\neq{^{0}0}$, then
\[
{^{i}a}\otimes{^{j}b} = {^{\phi(i,j)}(ab)},
\qquad
\phi(i,j)=
\begin{cases}
i+j-1-s,&\text{if }i+j-1>s,\\
i+j-1,&\text{if }i+j-1\le s.
\end{cases}
\]
Also, by the definition of $\oplus$, the sum ${^{j}b}\oplus{^{k}c}$ is determined by:
(i) if $j=k$ then it keeps sign $j$ and adds magnitudes;
(ii) if $j\neq k$ then it keeps the sign of the larger magnitude and subtracts magnitudes;
(iii) if magnitudes are equal in the opposite-sign case, it becomes ${^{0}0}$.

\medskip
\noindent\textbf{Case 1: ${^{i}a}={^{0}0}$.}
Then, by Definition~\ref{multiplication},
\[
{^{0}0}\otimes({^{j}b}\oplus{^{k}c})={^{0}0},
\qquad
({^{0}0}\otimes{^{j}b})\oplus({^{0}0}\otimes{^{k}c})={^{0}0}\oplus{^{0}0}={^{0}0}.
\]
Hence the identity holds.

\medskip
\noindent\textbf{Case 2: ${^{j}b}={^{0}0}$ or ${^{k}c}={^{0}0}$.}
WLOG let ${^{j}b}={^{0}0}$. Then ${^{j}b}\oplus{^{k}c}={^{k}c}$ and, by Definition~\ref{multiplication},
\[
{^{i}a}\otimes({^{j}b}\oplus{^{k}c})={^{i}a}\otimes{^{k}c},
\qquad
({^{i}a}\otimes{^{j}b})\oplus({^{i}a}\otimes{^{k}c})={^{0}0}\oplus({^{i}a}\otimes{^{k}c})={^{i}a}\otimes{^{k}c}.
\]
So the identity holds.

\medskip
\noindent\textbf{From now on assume ${^{i}a},{^{j}b},{^{k}c}\neq{^{0}0}$, hence $a,b,c\neq 0$ in $P$.}

\medskip
\noindent\textbf{Case 3: $j=k$.}
Then ${^{j}b}\oplus{^{k}c}={^{j}(b+c)}$. Using Definition~\ref{multiplication} (with the same sign index $\phi(i,j)$ on both products),
\begin{align*}
{^{i}a}\otimes({^{j}b}\oplus{^{k}c})
&= {^{i}a}\otimes{^{j}(b+c)}
 = {^{\phi(i,j)}\bigl(a(b+c)\bigr)},\\
({^{i}a}\otimes{^{j}b})\oplus({^{i}a}\otimes{^{k}c})
&= {^{\phi(i,j)}(ab)}\oplus{^{\phi(i,j)}(ac)}
 = {^{\phi(i,j)}(ab+ac)}.
\end{align*}
Since multiplication in $P$ distributes over addition, $a(b+c)=ab+ac$, hence both sides are equal.

\medskip
\noindent\textbf{Case 4: $j\neq k$ and $b>c$ (equivalently $|{^{j}b}|>|{^{k}c}|$).}
Then ${^{j}b}\oplus{^{k}c}={^{j}(b-c)}$. By Definition~\ref{multiplication},
\[
{^{i}a}\otimes({^{j}b}\oplus{^{k}c})
= {^{i}a}\otimes{^{j}(b-c)}
= {^{\phi(i,j)}\bigl(a(b-c)\bigr)}
= {^{\phi(i,j)}(ab-ac)}.
\]
On the right-hand side,
\[
({^{i}a}\otimes{^{j}b})\oplus({^{i}a}\otimes{^{k}c})
= {^{\phi(i,j)}(ab)}\oplus{^{\phi(i,k)}(ac)}.
\]
Because $a>0$ and $b>c$, we have $ab>ac$ in $P$, so the $\oplus$-rule (larger magnitude wins) yields:
\[
{^{\phi(i,j)}(ab)}\oplus{^{\phi(i,k)}(ac)} = {^{\phi(i,j)}(ab-ac)}.
\]
Thus both sides are equal.

\medskip
\noindent\textbf{Case 5: $j\neq k$ and $c>b$ (equivalently $|{^{j}b}|<|{^{k}c}|$).}
This is symmetric to Case 4. Now ${^{j}b}\oplus{^{k}c}={^{k}(c-b)}$, so
\[
{^{i}a}\otimes({^{j}b}\oplus{^{k}c})
= {^{i}a}\otimes{^{k}(c-b)}
= {^{\phi(i,k)}\bigl(a(c-b)\bigr)}
= {^{\phi(i,k)}(ac-ab)}.
\]
On the right-hand side, since $ac>ab$, the $\oplus$-rule returns sign $\phi(i,k)$ and magnitude difference:
\[
{^{\phi(i,j)}(ab)}\oplus{^{\phi(i,k)}(ac)} = {^{\phi(i,k)}(ac-ab)}.
\]
Hence both sides coincide.

\medskip
\noindent\textbf{Case 6: $j\neq k$ and $b=c$ (equivalently $|{^{j}b}|=|{^{k}c}|$).}
Then ${^{j}b}\oplus{^{k}c}={^{0}0}$, so by Definition~\ref{multiplication},
\[
{^{i}a}\otimes({^{j}b}\oplus{^{k}c})={^{i}a}\otimes{^{0}0}={^{0}0}.
\]
On the right-hand side,
\[
({^{i}a}\otimes{^{j}b})\oplus({^{i}a}\otimes{^{k}c})
= {^{\phi(i,j)}(ab)}\oplus{^{\phi(i,k)}(ac)}.
\]
Since $b=c$, we have $ab=ac$, so the opposite-sign equal-magnitude rule of $\oplus$ gives ${^{0}0}$. Therefore both sides are equal.

\medskip
\noindent\textbf{Conclusion.}
All possibilities reduce to the cases above, and in each case
\[
{^{i}a}\otimes\bigl({^{j}b}\oplus{^{k}c}\bigr)
=
\bigl({^{i}a}\otimes{^{j}b}\bigr)\oplus\bigl({^{i}a}\otimes{^{k}c}\bigr).
\]
Hence distributivity holds in $(\sigmac^{s},\oplus,\otimes)$.

\end{document}